\date{} \setlength{\textwidth}{15truecm}
\newtheorem{theorem}{Theorem}[section]
\newtheorem{proposition}[theorem]{Proposition}
\newtheorem{corollary}[theorem]{Corollary}
\newtheorem{example}[theorem]{Example}
\numberwithin{equation}{section}
\begin{document}

\title[]{On the problem of optimal fair exchange}
\maketitle

\begin{center}
A.V. Kolesnikov$^1$, S.N. Popova$^1$ 
\let\thefootnote\relax\footnotetext{$^1$ National Research University Higher School of Economics. 
This work is an output of a research project implemented as part of the Basic Research Program at the National Research University Higher School of Economics (HSE University)}
\end{center}

\vskip .2in

{\bf Abstract.}
We consider the problem of optimal exchange which can be formulated as a kind of optimal transportation problem. The existence of an optimal solution and a duality theorem for the optimal exchange problem are proved in case of completely regular topological spaces. We show the connection between the problem of optimal exchange and the optimal transportation problem with density constraints. With the use of this connection we obtain a formula for the optimal value in the problem of optimal exchange.

Keywords: optimal transportation problem, Kantorovich problem, density constraints.

\vskip .2in

\section{Introduction}

Recently various new modifications of the Monge-Kantorovich optimal transportation problem have been widely studied  
 (see \cite{ABS21}, \cite{B22}, \cite{BKS}, \cite{BPR}). 
Recall that for Borel probability measures $\mu$ and $\nu$ on topological spaces $X$ and $Y$ respectively 
and a nonnegative Borel measurable function $h$ on~$X\times Y$ the classical Kantorovich optimal transportation problem 
concerns the minimization of the integral  
$$
\int_{X \times Y} h(x, y) \sigma (dx \, dy)
$$
over all measures $\sigma$ from the set $\Pi(\mu,\nu)$ consisting of all Borel probability measures on $X\times Y$
with projections $\mu$ and $\nu$ on the factors, that is, $\sigma (A\times Y)=\mu(A)$ and $\sigma (X\times B)=\nu(B)$
for all Borel sets $A\subset X$ and $B\subset Y$. 
Denote by $Pr_X(\sigma)$ and $Pr_Y(\sigma)$ the projections of a measure $\sigma$ on $X$ and $Y$ respectively. 
The measures $\mu$ and $\nu$ are called marginal distributions or marginals and $h$ is called a cost function. 
Basic information about the Monge-Kantorovich problem can be found in \cite{ABS21}, \cite{BKS}, \cite{RR}, \cite{Sant}, \cite{V}. In \cite{G} and \cite{Sant} economic applications of the optimal transportation problem are addressed. 

In this work we consider the problem of optimal exchange which can be formulated as a kind of optimal transportation problem. 
Let $X$ be the set of trading participants and let $S$ be the set of goods. Let further $Y = X$ be a copy of $X$. 
We assume that a unit of the good $s \in S$ has a cost $c(s)$, where 
$c \colon S \to \mathbb R$ is a positive cost function. 

Suppose we are given a discrete distribution $\pi^{+}$ on $X \times S$ and a discrete distribution $\pi^{-}$ 
on $Y \times S$. 
For any $i \in X$, $s \in S$ the value $\pi^{+}_{i, s}$ is equal to the amount of the good $s$ 
that the $i$-th participant is ready to share with other participants. Similarly $\pi^{-}_{i, s}$ is equal 
to the amount of the good $s$ that the $i$-th participant wishes to receive. 
Denote by $\gamma_{i, j, s}$ the amount of the good $s$ that the $i$-th participant sends to the $j$-th participant.

We obtain the following problem: 
\begin{equation} \label{prob_discr}
\sum_{i, j, s} c(s) \gamma_{i, j, s} \to \max, 
\end{equation}
$$
\sum_{j} \gamma_{i, j, s} \le \pi^{+}_{i, s}, \quad \sum_{i} \gamma_{i, j, s} \le \pi^{-}_{j, s}, 
$$
$$
\sum_{j, s} c(s) \gamma_{k, j, s} = \sum_{i, s} c(s) \gamma_{i, k, s}. 
$$
The last equality is the balance condition: for any participant $k$ the total cost of the sent items of goods 
is equal to the total cost of the received items of goods. 

Let us consider the continuous setting of the optimal exchange problem which is a natural generalization of the discrete 
problem (\ref{prob_discr}). 
Let $X = Y$ and $S$ be topological spaces. Let $\pi^{+}$ be a nonnegative Borel measure on $X \times S$ 
and let $\pi^{-}$ be a nonnegative Borel measure on $Y \times S$. 
Assume also that $c \colon S \to \mathbb R$ is a positive Borel measurable function. 
The optimal exchange problem concerns the maximization of the integral functional 
\begin{equation} \label{prob_cont}
\int_{X \times Y \times S} c(s) d\gamma \to \max 
\end{equation}
over all nonnegative Borel measures $\gamma$ on $X \times Y \times S$ satisfying the constraints 
\begin{equation} \label{ineq}
Pr_{X \times S}(\gamma) \le \pi^{+}, \quad Pr_{Y \times S}(\gamma) \le \pi^{-}, 
\end{equation}
\begin{equation} \label{eq}
\int_{Y \times S} c(s) d\gamma = \int_{X \times S} c(s) d\gamma. 
\end{equation}

Denote by
\begin{multline*}
\Gamma(\pi^{+}, \pi^{-}, c) = \bigl\{\gamma \in \mathcal M_{+}(X \times Y \times S): 
Pr_{X \times S}(\gamma) \le \pi^{+}, \quad Pr_{Y \times S}(\gamma) \le \pi^{-}, \\
\int_{Y \times S} c(s) d\gamma = \int_{X \times S} c(s) d\gamma \bigr\} 
\end{multline*}
the set of admissible measures $\gamma \in \mathcal M_{+}(X \times Y \times S)$ in the problem of optimal exchange. 

Let us introduce the notation and terminology that will be used in this paper. 
The Borel $\sigma$-algebra of the topological space $X$ is denoted by $\mathcal{B}(X)$.
A nonnegative Borel measure $\mu$ on a topological space $X$ is called Radon if for every Borel set $B$ and every 
$\varepsilon>0$ there exists a compact set $K\subset B$ with $\mu(B\backslash K)<\varepsilon$. 
The set of Radon nonnegative measures on $X$ is denoted by $\mathcal M_{+}(X)$.
A family $M\subset \mathcal M_{+}(X)$ is called uniformly tight if for every $\varepsilon>0$ there exists a compact set $K$ 
such that $\mu(X\backslash K)<\varepsilon$ for all $\mu\in M$.

The space $\mathcal M(X)$ of bounded signed Radon measures on $X$ is endowed with the weak topology generated by the seminorms of the form
$$
\mu \mapsto \biggl|\int_X f\, d\mu\biggr|,
$$
where $f$ is a bounded continuous function on $X$.

For a signed measure $\mu \in \mathcal M(X)$ denote by $\mu_{+}$ and $\mu_{-}$ the positive and negative parts of the measure $\mu$ in the Hahn decomposition $\mu = \mu_{+} - \mu_{-}$.  
For nonnegative measures $\mu, \nu \in \mathcal M_{+}(X)$ we denote $\mu \land \nu = \mu - (\mu - \nu)_{+}$. 

For any measure $\sigma \in \mathcal M_{+}(X \times Y)$ with the projection $\mu$ on $X$ 
conditional measures $\sigma^x$, $x \in X$, are nonnegative Radon measures on $Y$ with the property
$$
\int_{X\times Y}\varphi(x)\psi(y)\, \sigma(dx\, dy)
=\int_X \int_Y \varphi(x)\psi(y)\, \sigma^x(dy)\, \mu(dx)
$$
for all bounded Borel functions $\varphi$ on $X$ and $\psi$ on $Y$, where it is also assumed that the function
$x\mapsto \sigma^x(B)$ is $\mu$-measurable for every Borel set $B\subset Y$.
Conditional measures exist under rather broad assumptions, for example, in case of Souslin spaces. 
The fact that a measure $\sigma$ with the projection $\mu$ on $X$ has conditional measures 
is usually expressed as the equality 
$$
\sigma(dx\, dy)=\sigma^x(dy)\, \mu(dx). 
$$

We shall need below the so-called gluing lemma (see \cite{BK}). Let $X_1, X_2, X_3$ be completely regular topological spaces  and let $\mu_{1, 2}$ and $\mu_{2, 3}$ be nonnegative Radon measures on $X_1 \times X_2$ and $X_2 \times X_3$ respectively, such that their projections on $X_2$ coincide. Then there exists a nonnegative Radon measure $\mu$ on $X_1 \times X_2 \times X_3$ such that its projection on $X_1 \times X_2$ is $\mu_{1, 2}$ and its projection on $X_2 \times X_3$ is $\mu_{2, 3}$. 

If the measure $\mu_{2, 3}$ has conditional measures 
$$
\mu_{2, 3}(dx_2 \, dx_3) = \mu_{2, 3}^{x_2}(dx_3) \, \mu_2(dx_2),
$$ 
where $\mu_2$ is the projection of $\mu_{2, 3}$ on $X_2$, 
then we can take the measure
$$
\mu(dx_1 \, dx_2 \, dx_3) = \mu_{2, 3}^{x_2}(dx_3) \, \mu_{1, 2}(dx_1 \, dx_2)
$$
as a gluing of measures $\mu_{1, 2}$ and $\mu_{2, 3}$. 
 
In this paper we prove the existence of an optimal solution to the problem of optimal exchange (\ref{prob_cont})-(\ref{eq}) (see Section 2). Furthermore, in Section 2 the dual problem for the problem (\ref{prob_cont})-(\ref{eq}) is formulated 
and the duality formula is proved in case of completely regular topological spaces. 
In Section 3 we show the connection between the problem of optimal exchange and the optimal transportation problem with density constraints. With the use of this connection we propose a method of constructing the optimal solution in the optimal exchange problem. In Section 4 under the assumption that the supports of the measures $\pi^{+}$ and $\pi^{-}$ are disjoint
the optimal exchange problem is reduced to a Kantorovich optimal transportation problem with density constraints. 

\section{Duality in the optimal exchange problem} 

First we prove the existence of an optimal solution to the optimal exchange problem for Radon measures on completely regular topological spaces.  

\begin{theorem} \label{th_exist}  
Let $X = Y$ and $S$ be completely regular topological spaces, 
let $\pi^{+} \in \mathcal M_{+}(X \times S)$, $\pi^{-}\in \mathcal M_{+}(Y \times S)$ be Radon nonnegative measures and let $c \colon S \to \mathbb R$ be a positive Borel measurable cost function. 
Then the maximum in the problem (\ref{prob_cont})-(\ref{eq}) is attained, that is, there exists an optimal measure $\gamma \in \Gamma(\pi^{+}, \pi^{-}, c)$. 
\end{theorem}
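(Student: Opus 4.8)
The plan is to apply the direct method of the calculus of variations, after a change of variables that is really the heart of the matter: it removes the merely Borel cost function $c$ from the objective functional and from the balance equality~(\ref{eq}). For $\gamma \in \mathcal M_{+}(X \times Y \times S)$ write $c \cdot \gamma$ for the measure with density $c(s)$ with respect to $\gamma$; since $c$ is positive and Borel, $\gamma \mapsto c \cdot \gamma$ is a bijection of $\mathcal M_{+}(X \times Y \times S)$ onto itself with inverse $\sigma \mapsto c^{-1} \cdot \sigma$. Because $c$ depends only on the last coordinate, $Pr_{X \times S}(c \cdot \gamma) = c \cdot Pr_{X \times S}(\gamma)$ and likewise on $Y \times S$; the two sides of~(\ref{eq}), which is an equality of measures on $X = Y$, are exactly $Pr_X(c \cdot \gamma)$ and $Pr_Y(c \cdot \gamma)$; and the functional in~(\ref{prob_cont}) is the total mass $(c \cdot \gamma)(X \times Y \times S)$. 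Hence, putting $\widetilde\pi^{+} := c \cdot \pi^{+}$ and $\widetilde\pi^{-} := c \cdot \pi^{-}$ (which are again Radon measures), the substitution $\widetilde\gamma := c \cdot \gamma$ identifies $\Gamma(\pi^{+}, \pi^{-}, c)$ with
$$
\widetilde\Gamma := \bigl\{ \widetilde\gamma \in \mathcal M_{+}(X \times Y \times S) : \ Pr_{X \times S}(\widetilde\gamma) \le \widetilde\pi^{+}, \ \ Pr_{Y \times S}(\widetilde\gamma) \le \widetilde\pi^{-}, \ \ Pr_X(\widetilde\gamma) = Pr_Y(\widetilde\gamma) \bigr\},
$$
and reduces the problem to maximizing the total mass $\widetilde\gamma \mapsto \widetilde\gamma(X \times Y \times S)$ over $\widetilde\Gamma$. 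It therefore suffices to produce a maximizer in $\widetilde\Gamma$.

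I would treat first the case in which $\widetilde\pi^{+}$ is a finite measure (the case of finite $\widetilde\pi^{-}$ is symmetric). Since $0 \in \widetilde\Gamma$ and $\widetilde\gamma(X \times Y \times S) = Pr_{X \times S}(\widetilde\gamma)(X \times S) \le \widetilde\pi^{+}(X \times S) < \infty$ for every $\widetilde\gamma \in \widetilde\Gamma$, the supremum $M$ of the total mass over $\widetilde\Gamma$ is finite and $\widetilde\Gamma$ is uniformly bounded. To see that $\widetilde\Gamma$ is uniformly tight, given $\varepsilon > 0$ use the Radon property to choose compact sets $C^{+} \subset X \times S$ and $C^{-} \subset Y \times S$ with $\widetilde\pi^{+}\bigl((X \times S) \setminus C^{+}\bigr) < \varepsilon$ and $\widetilde\pi^{-}\bigl((Y \times S) \setminus C^{-}\bigr) < \varepsilon$, and set $K := Pr_X(C^{+}) \times Pr_Y(C^{-}) \times \bigl(Pr_S(C^{+}) \cup Pr_S(C^{-})\bigr)$, a compact subset of $X \times Y \times S$; splitting $(X \times Y \times S) \setminus K$ according to which coordinate has left its factor and using $Pr_{X \times S}(\widetilde\gamma) \le \widetilde\pi^{+}$ together with $Pr_{Y \times S}(\widetilde\gamma) \le \widetilde\pi^{-}$, one obtains $\widetilde\gamma\bigl((X \times Y \times S) \setminus K\bigr) < 3\varepsilon$ uniformly over $\widetilde\Gamma$. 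By the Prokhorov theorem for Radon measures on completely regular spaces, $\widetilde\Gamma$ is then relatively compact in the weak topology, so a maximizing net $\widetilde\gamma_{\alpha} \in \widetilde\Gamma$ has a subnet converging weakly to some $\widetilde\gamma \in \mathcal M_{+}(X \times Y \times S)$.

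It remains to verify that $\widetilde\gamma$ lies in $\widetilde\Gamma$ and is optimal. The marginal maps $\sigma \mapsto Pr_{X \times S}(\sigma)$, $\sigma \mapsto Pr_X(\sigma)$ and $\sigma \mapsto Pr_Y(\sigma)$ are weakly continuous, since a bounded continuous function on a factor pulls back to a bounded continuous function on the product. Hence $Pr_X(\widetilde\gamma) = Pr_Y(\widetilde\gamma)$, because this equality holds along the net and weak limits of Radon measures on a completely regular space are unique; and $Pr_{X \times S}(\widetilde\gamma) \le \widetilde\pi^{+}$, $Pr_{Y \times S}(\widetilde\gamma) \le \widetilde\pi^{-}$, since for Radon measures the relation $\lambda \le \rho$ is equivalent to $\int f \, d\lambda \le \int f \, d\rho$ for all nonnegative bounded continuous $f$, and the latter passes to weak limits. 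Testing against the constant function $1$ gives $\widetilde\gamma(X \times Y \times S) = \lim_{\alpha} \widetilde\gamma_{\alpha}(X \times Y \times S) = M$, so $\widetilde\gamma \in \widetilde\Gamma$ is a maximizer and $\gamma := c^{-1} \cdot \widetilde\gamma \in \Gamma(\pi^{+}, \pi^{-}, c)$ attains the maximum in~(\ref{prob_cont})--(\ref{eq}). The remaining case, in which both $\widetilde\pi^{+}$ and $\widetilde\pi^{-}$ have infinite mass, I would reduce to the one above by a routine exhaustion: apply the result with $\pi^{+}$ and $\pi^{-}$ replaced by their restrictions to sets of the form $X \times \{c \le k\}$ and $Y \times \{c \le k\}$ intersected with sets of finite measure, producing maximizers $\gamma_{k}$, and let the truncation parameters tend to their limits.

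The hard part is precisely the Borel regularity of $c$: a direct weak-limit argument applied to $\gamma$ itself fails, because neither $\gamma \mapsto \int c(s) \, d\gamma$ nor $\gamma \mapsto \int \varphi(x) c(s) \, d\gamma$ --- the functionals that encode the objective and the balance constraint --- is weakly continuous, or even semicontinuous in the direction one would need. The change of variables $\widetilde\gamma = c \cdot \gamma$ is what repairs this, replacing the Borel integrand by the weakly continuous total-mass functional and the balance equality by the weakly closed condition $Pr_X = Pr_Y$; after that the only genuine topological inputs are the Prokhorov-type weak compactness for Radon measures on completely regular spaces and the weak continuity of the marginal maps, both standard.
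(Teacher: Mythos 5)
Your proof is correct and follows essentially the same route as the paper: the substitution $\widetilde\gamma = c\cdot\gamma$ reducing to the unit cost, uniform tightness of the admissible set via the Radon property of $\pi^{+}$ and $\pi^{-}$, Prokhorov's theorem, and weak continuity of the total-mass functional. Your extra treatment of the case of infinite total mass is not needed under the paper's conventions (where the measures in $\mathcal M_{+}$ are finite Radon measures), and is the only part left at the level of a sketch.
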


\begin{proof}
Note that the problem (\ref{prob_cont})-(\ref{eq}) with an arbitrary cost function $c(s)$ can be reduced to the problem with constant cost function $c(s) = 1$. 
Indeed, set $\tilde \gamma = c(s) \gamma$. Then the problem (\ref{prob_cont})-(\ref{eq}) is equivalent to the following problem of optimal exchange with unit cost function:
$$
\int_{X \times Y \times S} d \tilde \gamma \to \max, 
$$
$$
Pr_{X \times S}(\tilde \gamma) \le c(s) \pi^{+}, \quad Pr_{Y \times S}(\tilde \gamma) \le c(s) \pi^{-}, 
$$
$$
\int_{Y \times S} d \tilde \gamma = \int_{X \times S} d \tilde \gamma. 
$$

Further without limitation of generality we assume that $c(s) = 1$. 

Let us prove that the set $\Gamma(\pi^{+}, \pi^{-}, c)$ is compact in the weak topology. For this purpose we prove that 
the set $\Gamma(\pi^{+}, \pi^{-}, c)$ is uniformly tight. 
Since the measures $\pi^{+}$ and $\pi^{-}$ are Radon, for any $\varepsilon > 0$ there exist compact sets 
$K_1 \subset X \times S$ and $K_2 \subset Y \times S$ such that $\pi^{+}((X \times S) \setminus K_1) < \varepsilon/2$ and
$\pi^{-}((Y \times S) \setminus K_2) < \varepsilon/2$. Let
$$K = \{(x, y, s) \in X \times Y \times S: (x, s) \in K_1, (y, s) \in K_2\}.$$
Then $K$ is a compact subset of $X \times  Y \times S$ and for any measure $\gamma \in \Gamma(\pi^{+}, \pi^{-}, c)$ we have
$$
\gamma((X \times Y \times S) \setminus K) \le \pi^{+}((X \times S) \setminus K_1) + \pi^{-}((Y \times S) \setminus K_2) < \varepsilon. 
$$
Thus, the set $\Gamma(\pi^{+}, \pi^{-}, c)$ is uniformly tight. Furthermore, $\Gamma(\pi^{+}, \pi^{-}, c)$ 
is uniformly bounded in variation and closed in the weak topology. Therefore, by Prokhorov's theorem (see \cite{B07}) 
the set $\Gamma(\pi^{+}, \pi^{-}, c)$ is compact in the weak topology. 
The functional $\gamma \mapsto \int c(s) d\gamma$ is continuous in the weak topology and, therefore, 
attains a maximum on the compact set $\Gamma(\pi^{+}, \pi^{-}, c)$. 

\end{proof}

Let us formulate the dual problem for the problem of optimal exchange (\ref{prob_cont})-(\ref{eq}):
we aim to minimize the functional 
\begin{equation} \label{prob_dual}
\int_{X \times S} f(x, s) d\pi^{+} + \int_{Y \times S} g(y, s) d \pi^{-} \to \min
\end{equation}
under the constraints
\begin{equation} \label{cond_dual1}
f(x, s) + g(y, s) + c(s) h(x) - c(s) h(y) \ge c(s),
\end{equation}
where
\begin{equation} \label{cond_dual2}
f \colon X \times S \to \mathbb R_{+}, \quad g \colon Y \times S \to \mathbb R_{+}, \quad h \colon X \to \mathbb R 
\end{equation}
are Borel measurable functions. 

\begin{proposition}
The maximum in the primal problem (\ref{prob_cont})-(\ref{eq}) is not greater than the infimum in the dual problem  (\ref{prob_dual})-(\ref{cond_dual2}). 
\end{proposition}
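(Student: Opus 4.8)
The plan is to establish this weak-duality bound by the standard device of pairing a feasible measure with a feasible triple of functions. Fix an arbitrary admissible $\gamma\in\Gamma(\pi^{+},\pi^{-},c)$ and arbitrary Borel functions $f,g,h$ satisfying (\ref{cond_dual1})--(\ref{cond_dual2}); we may assume that the dual objective (\ref{prob_dual}) is finite, since otherwise there is nothing to prove. The target is the inequality
$$
\int_{X\times Y\times S}c(s)\,d\gamma\ \le\ \int_{X\times S}f\,d\pi^{+}+\int_{Y\times S}g\,d\pi^{-},
$$
after which it suffices to take the supremum over $\gamma$ and the infimum over $(f,g,h)$.

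The proof is a single integration. Integrating the pointwise inequality (\ref{cond_dual1}) against the nonnegative measure $\gamma$ gives, with all integrals taken over $X\times Y\times S$,
$$
\int c(s)\,d\gamma\ \le\ \int f(x,s)\,d\gamma+\int g(y,s)\,d\gamma+\int c(s)h(x)\,d\gamma-\int c(s)h(y)\,d\gamma.
$$
Since $f\ge0$ and, by (\ref{ineq}), $Pr_{X\times S}(\gamma)\le\pi^{+}$, the projection formula yields $\int f(x,s)\,d\gamma=\int_{X\times S}f\,d\bigl(Pr_{X\times S}(\gamma)\bigr)\le\int_{X\times S}f\,d\pi^{+}$, and likewise $\int g(y,s)\,d\gamma\le\int_{Y\times S}g\,d\pi^{-}$; in particular both are finite. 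For the last two terms I would use the balance condition (\ref{eq}): read as an equality of Radon measures on $X=Y$, it asserts exactly that the projections onto the first and the second coordinate of the measure $c(s)\,\gamma$ on $X\times Y\times S$ coincide, hence $\int c(s)h(x)\,d\gamma=\int c(s)h(y)\,d\gamma$ and these terms cancel. Substituting into the display gives the target inequality.

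The only delicate point — and the step I expect to carry the technical content — is the cancellation of the $h$-terms when $h$ is merely Borel: a priori $\int c(s)h(x)\,d\gamma$ and $\int c(s)h(y)\,d\gamma$ need not be finite, so the difference could be an indeterminate expression. I would reduce to the benign case of bounded $h$ by two-sided truncation $h_{N}=(h\vee(-N))\wedge N$: a short computation shows $f(x,s)+g(y,s)+c(s)h_{N}(x)-c(s)h_{N}(y)\ge c(s)-c(s)(h(x)-N)_{+}-c(s)\bigl(-N-h(y)\bigr)_{+}$, so the triple $\bigl(f+c(s)(h(x)-N)_{+},\,g+c(s)(-N-h(y))_{+},\,h_{N}\bigr)$ is again feasible with $h_{N}$ bounded, and the argument above applies verbatim to it; letting $N\to\infty$ (the truncation errors decrease to $0$ pointwise) recovers the inequality for $(f,g,h)$ once the two integrals $\int c(s)h_{+}\,d\pi^{+}$ and $\int c(s)h_{-}\,d\pi^{-}$ are finite. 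The residual degenerate case I would dispatch using the trivial a priori bound $\int c(s)\,d\gamma\le\min\bigl(\int_{X\times S}c(s)\,d\pi^{+},\int_{Y\times S}c(s)\,d\pi^{-}\bigr)$, immediate from (\ref{ineq}), together with a cutoff. Apart from this bookkeeping, the proof is a one-line computation.
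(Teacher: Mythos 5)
Your proof is correct and follows essentially the same route as the paper: integrate the dual constraint (\ref{cond_dual1}) against $\gamma$, bound the $f$- and $g$-terms via the marginal inequalities (\ref{ineq}) using $f,g\ge 0$, and cancel the $h$-terms by reading the balance condition (\ref{eq}) as an equality of the two projections of $c(s)\gamma$. The truncation argument you add to justify the cancellation for unbounded Borel $h$ is extra care the paper omits, but it does not change the approach.
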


\begin{proof}
Let $\gamma \in \Gamma(\pi^{+}, \pi^{-}, c)$ and Borel measurable functions $f \colon X \times S \to \mathbb R_{+}$, 
$g \colon Y \times S \to \mathbb R_{+}$, $h \colon X \to \mathbb R$ be such that
$f(x, s) + g(y, s) + c(s) h(x) - c(s) h(y) \ge c(s)$ for all $x \in X$, $y \in Y$, $s \in S$. Then
\begin{multline*}
\int_{X \times Y \times S} c(s) d\gamma \le \int_{X \times Y \times S} (f(x, s) + g(y, s) + c(s) h(x) - c(s) h(y)) d\gamma 
= \\ = \int_{X \times S} f(x, s) d Pr_{X \times S}(\gamma) + \int_{Y \times S} g(y, s) d Pr_{Y \times S}(\gamma) + \\ +
\int_X h(x) \int_{Y \times S} c(s) d\gamma - \int_Y h(y) \int_{X \times S} c(s) d\gamma \le \\ \le
\int_{X \times S} f(x, s) d\pi^{+} + \int_{Y \times S} g(y, s) d \pi^{-}
\end{multline*}
in virtue of (\ref{ineq}) and (\ref{eq}). 
\end{proof}

We prove the equality of the optimal values in the primal and dual problem for Radon measures on completely regular topological spaces. 

\begin{theorem} \label{th_dual}
Let $X = Y$ and $S$ be completely regular topological spaces, 
let $\pi^{+} \in \mathcal M_{+}(X \times S)$, $\pi^{-}\in \mathcal M_{+}(Y \times S)$ be Radon nonnegative measures
and let $c \colon S \to \mathbb R$ be a positive Borel measurable cost function. 
Then the maximum in the problem (\ref{prob_cont})-(\ref{eq}) is equal to the infimum in the dual problem 
(\ref{prob_dual})-(\ref{cond_dual2}). 
\end{theorem}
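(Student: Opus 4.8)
The plan is to deduce the equality from the Kantorovich-type duality for the optimal transportation problem with \emph{inequality} constraints on the marginals, dualizing the balance condition (\ref{eq}) by a minimax argument. As in the proof of Theorem~\ref{th_exist} we may assume $c(s)\equiv1$: the substitution $\tilde\gamma=c(s)\gamma$ turns (\ref{prob_cont})--(\ref{eq}) into the optimal exchange problem with unit cost and marginals $c\pi^{+}$, $c\pi^{-}$, while dividing (\ref{cond_dual1}) by $c(s)$ and replacing $f,g$ by $f/c,g/c$ turns the dual problem into the corresponding one, and both values are preserved. So from now on $c\equiv1$; the balance condition becomes $Pr_X(\gamma)=Pr_Y(\gamma)$ (as measures on $X=Y$) and the dual constraint becomes $f(x,s)+g(y,s)+h(x)-h(y)\ge1$.

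Put $\mathcal P=\{\gamma\in\mathcal M_{+}(X\times Y\times S)\colon Pr_{X\times S}(\gamma)\le\pi^{+},\ Pr_{Y\times S}(\gamma)\le\pi^{-}\}$. The tightness argument of Theorem~\ref{th_exist} (which uses only the two inequality constraints) shows $\mathcal P$ is uniformly tight, hence, being also uniformly bounded in variation and weakly closed, weakly compact by Prokhorov's theorem; it is clearly convex. As $X$ is completely regular, bounded Radon measures on $X$ are separated by $C_b(X)$, so $\inf_{h\in C_b(X)}\int_{X\times Y\times S}(h(y)-h(x))\,d\gamma$ equals $0$ if $Pr_X(\gamma)=Pr_Y(\gamma)$ and $-\infty$ otherwise. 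Therefore the maximum in (\ref{prob_cont})--(\ref{eq}) equals
$$
\sup_{\gamma\in\mathcal P}\ \inf_{h\in C_b(X)}\int_{X\times Y\times S}\bigl(1-h(x)+h(y)\bigr)\,d\gamma .
$$
Since $(\gamma,h)\mapsto\int(1-h(x)+h(y))\,d\gamma$ is affine and weakly continuous in $\gamma$ and affine and norm-continuous in $h$, while $\mathcal P$ is convex and weakly compact and $C_b(X)$ convex, a minimax theorem (e.g.\ Sion's) lets us swap $\sup$ and $\inf$, so this maximum also equals $\inf_{h\in C_b(X)}\sup_{\gamma\in\mathcal P}\int(1-h(x)+h(y))\,d\gamma$.

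For fixed $h$ the inner supremum is the optimal transportation problem on $X\times Y\times S$ with bounded continuous cost $w_h(x,y,s)=1-h(x)+h(y)$ under the inequality constraints $Pr_{X\times S}(\gamma)\le\pi^{+}$, $Pr_{Y\times S}(\gamma)\le\pi^{-}$, and I would invoke that on completely regular spaces its value equals
$$
\inf\bigl\{\textstyle\int_{X\times S}f\,d\pi^{+}+\int_{Y\times S}g\,d\pi^{-}\colon f,g\ge0\ \text{Borel},\ f(x,s)+g(y,s)\ge w_h(x,y,s)\bigr\}.
$$
This duality for inequality-constrained marginals is the main ingredient; it can be derived from the equality-constrained Kantorovich duality by adjoining isolated ``reservoir'' points $p$ to $X$ and $q$ to $Y$, replacing $\pi^{+},\pi^{-}$ by $\pi^{+}+\delta_p\otimes Pr_S(\pi^{-})$, $\pi^{-}+\delta_q\otimes Pr_S(\pi^{+})$ and extending $w_h$ by zero, so that by the gluing lemma and the identity $Pr_S(Pr_{X\times S}(\gamma))=Pr_S(Pr_{Y\times S}(\gamma))$ the enlarged problem has equality marginals and the same value, the nonnegativity of $f,g$ being recovered by normalizing the dual potentials at the reservoir points. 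Inserting $w_h=1-h(x)+h(y)$, the constraint becomes $f(x,s)+g(y,s)+h(x)-h(y)\ge1$, and combining the two infima the maximum in (\ref{prob_cont})--(\ref{eq}) equals
$$
\inf\bigl\{\textstyle\int_{X\times S}f\,d\pi^{+}+\int_{Y\times S}g\,d\pi^{-}\colon f,g\ge0\ \text{Borel},\ h\in C_b(X),\ f(x,s)+g(y,s)+h(x)-h(y)\ge1\bigr\}.
$$

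As $C_b(X)$ consists of Borel functions, this last infimum is no smaller than the infimum in the dual problem (\ref{prob_dual})--(\ref{cond_dual2}), in which $h$ ranges over all Borel functions; with the reverse inequality from the Proposition this proves that the maximum in (\ref{prob_cont})--(\ref{eq}) equals the infimum in (\ref{prob_dual})--(\ref{cond_dual2}). The sole non-routine point is the Kantorovich duality for the inequality-constrained transport problem on completely regular spaces used above; everything else is the minimax interchange and the weak compactness already obtained in Theorem~\ref{th_exist}.
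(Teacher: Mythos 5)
Your architecture is genuinely different from the paper's: you dualize only the balance condition (\ref{eq}) by a minimax argument over $h\in C_b(X)$, and then appeal to a Kantorovich-type duality for the remaining inequality-constrained problem, whereas the paper runs a single Fenchel--Rockafellar argument on $C_b(X\times Y\times S)$ with one functional $\Theta$ encoding the $h$-constraint and another $\Xi$ encoding the marginal inequalities, computes both conjugates explicitly, and then passes to Stone--\v{C}ech compactifications. The minimax half of your argument is sound: $\mathcal P$ is convex and weakly compact by the tightness argument of Theorem~\ref{th_exist}, the pairing is bilinear and separately continuous, Radon measures on a completely regular space are separated by $C_b$, and the closing chain of inequalities against the weak-duality Proposition is correct.

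The gap is the step you yourself flag as the main ingredient: the strong duality
$\sup_{\gamma\in\mathcal P}\int w_h\,d\gamma=\inf\{\int f\,d\pi^{+}+\int g\,d\pi^{-}\}$ for the inequality-constrained problem. Your reservoir-point reduction does convert the inequality constraints into equality constraints with matched masses (and your normalization trick for recovering $f,g\ge0$ from the enlarged potentials is fine), but the resulting equality-constrained problem is \emph{not} a classical two-marginal Kantorovich problem: the coupling lives on $(X\cup\{p\})\times(Y\cup\{q\})\times S$ with prescribed projections onto the two \emph{overlapping} factors $(X\cup\{p\})\times S$ and $(Y\cup\{q\})\times S$, which share the $S$-coordinate. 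No off-the-shelf Kantorovich duality applies to this overlapping-marginal structure; forcing it into the two-space format $(X\times S)\times(Y\times S)$ requires a cost equal to $-\infty$ off the closed set $\{s=s'\}$, which takes you outside the bounded-continuous-cost duality you are implicitly invoking and raises exactly the kind of regularity issues the classical theorem avoids. Establishing the duality for this overlapping-marginal problem (on completely regular, not just compact, spaces) requires essentially the same Fenchel--Rockafellar-plus-compactification work that the paper carries out directly for the full problem, so as written your proof defers its only hard step to an unproven lemma. To close the gap you would need either to prove that lemma by a Hahn--Banach/Fenchel--Rockafellar argument on $C_b(X\times Y\times S)$ (at which point the minimax layer buys little over the paper's one-shot approach), or to cite a precise duality theorem for transport problems with marginals on overlapping coordinate subspaces.
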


\begin{proof}
As in the proof of Theorem \ref{th_exist}, without limitation of generality we may assume that $c(s) = 1$ for all $s \in S$. 

First we prove Theorem \ref{th_dual} in the case where $X = Y$ and $S$ are compact topological spaces. 
We shall use the Fenchel-Rockafellar duality theorem. Let $U = C_b(X \times Y \times S)$. 
By Riesz theorem we have $U^* = \mathcal M(X \times Y \times S)$. 
Let us define the functional $\Theta$ on $C_b(X \times Y \times S)$ in the following way. 
For any function $u \in C_b(X \times Y \times S)$ we set $\Theta(u) = 0$, if there exists a function $h \in C_b(X)$ such that 
\begin{equation} \label{def_theta}
u(x, y, s) + h(x) - h(y) \ge 1 \quad \forall x \in X, y \in Y, s \in S, 
\end{equation}
and we set $\Theta(u) = +\infty$ otherwise. 
Next we define
\begin{multline*}
\Xi(u) = \inf \Bigl\{\int_{X \times S} f(x, s) d\pi^{+} + \int_{Y \times S} g(y, s) d \pi^{-}: f \in C_b(X \times S), g \in C_b(Y \times S), \\  f \ge 0, g \ge 0, \, u(x, y, s) = f(x, s) + g(y, s) \Bigr\}.
\end{multline*}
(we set $\Xi(u) = +\infty$ if such functions $f$ and $g$ do not exist). 
The functionals $\Theta$ and $\Xi$ are convex on $C_b(X \times Y \times S)$. For the function $u_0$ which is equal to a constant greater than 1, we have $\Theta(u_0) = 0$, $\Xi(u_0) < +\infty$ 
and $\Theta$ is continuous at $u_0$ (since $\Theta(u) = 0$ for all functions $u \ge 1$).  
By Fenchel-Rockafellar theorem (see \cite{V})
$$
\inf_{u \in C_b(X \times Y \times S)} (\Theta(u) + \Xi(u)) = \max_{\gamma \in \mathcal M(X \times Y \times S)} (-\Theta^*(-\gamma) - \Xi^*(\gamma)). 
$$

Note that $\inf_{u \in U} (\Theta(u) + \Xi(u))$ is equal to the infimum in the dual problem (\ref{prob_dual})-(\ref{cond_dual2}). Let us show that the maximum on the right-hand side is equal to the maximum in the primal problem (\ref{prob_cont})-(\ref{eq}). 
For this we prove that if the value $-\Theta^*(-\gamma) - \Xi^*(\gamma)$ is finite for a measure $\gamma \in \mathcal M(X \times Y \times S)$, then $\gamma \in \Gamma(\pi^{+}, \pi^{-}, c)$ 
and
$$
-\Theta^*(-\gamma) - \Xi^*(\gamma) = \int_{X \times Y \times S} d\gamma. 
$$

First we consider the value $-\Theta^*(-\gamma)$. Let us prove that  
$-\Theta^*(-\gamma) = \int_{X \times Y \times S} d \gamma$, if the measure $\gamma$ is nonnegative and $\int_{Y \times S} d\gamma = \int_{X \times S} d\gamma$,  
and $-\Theta^*(-\gamma) = -\infty$ otherwise.   

We have
$$
- \Theta^*(-\gamma) = - \sup_{u \in U} \Bigl(-\int_{X \times Y \times S} u d\gamma - \Theta(u)\Bigr) = 
\inf_{u \in U} \Bigl(\int_{X \times Y \times S} u d\gamma + \Theta(u) \Bigr).
$$

If the measure $\gamma$ is not nonnegative, then there exists a function $\varphi \in C_b(X \times Y \times S)$ such that
$\varphi \ge 0$ and $\int_{X \times Y \times S} \varphi d\gamma < 0$. Let $u(x, y, s) = 1 + t \, \varphi(x, y, s)$, where $t > 0$. Then
$\Theta(u) = 0$ and 
$$
\int_{X \times Y \times S} u d \gamma =  \int_{X \times Y \times S} d\gamma  + t \int_{X \times Y \times S} \varphi d \gamma \to -\infty, \quad t \to +\infty. 
$$
Therefore, $-\Theta^{*}(\gamma) = -\infty$.

If the measure $\gamma$ does not satisfy the condition that $\int_{Y \times S} d\gamma = \int_{X \times S} d\gamma$, 
then there exists a function $h \in C_b(X)$ such that 
$$\int_{X \times Y \times S} h(x) d \gamma < \int_{X \times Y \times S} h(y) d\gamma. $$
Take $$u(x, y, s) = 1 + t \, h(x) - t \, h(y),$$
where $t > 0$.  
Then $\Theta(u) = 0$ and 
\begin{multline*}
\int_{X \times Y \times S} u d \gamma = \int_{X \times Y \times S} d\gamma + \\ + t \Bigl(\int_{X \times Y \times S} h(x) d \gamma - \int_{X \times Y \times S} h(y) d\gamma \Bigr) \to -\infty, \quad t \to +\infty. 
\end{multline*}
Therefore, $-\Theta^*(-\gamma) = -\infty$. 

Suppose that the measure $\gamma$ is nonnegative and satisfies the condition that $\int_{Y \times S} d\gamma = \int_{X \times S} d\gamma$. 
We show that
\begin{equation} \label{eq_theta}
- \Theta^*(-\gamma) = \int_{X \times Y \times S} d \gamma. 
\end{equation}
Letting $u(x, y, s) = 1$, we obtain $- \Theta^*(-\gamma) \le \int_{X \times Y \times S} d \gamma$. 
Let us prove that  $- \Theta^*(-\gamma) \ge \int_{X \times Y \times S} d \gamma$.
Indeed,  if $\Theta(u) < +\infty$, then there exists a function $h \in C_b(X)$ which satifies the inequality (\ref{def_theta}) 
and hence 
\begin{multline*}
\int_{X \times Y \times S} u(x, y, s) d\gamma \ge \int_{X \times Y \times S} d \gamma - 
\int_{X \times Y \times S} h(x) d\gamma + \int_{X \times Y \times S} h(y) d\gamma = \\ = \int_{X \times Y \times S} d\gamma. 
\end{multline*}
Thus, $\inf_{u \in U} (\int_{X \times Y \times S} u d\gamma + \Theta(u)) \ge \int_{X \times Y \times S} d\gamma$. 
Therefore, the equality (\ref{eq_theta}) holds true. 

Now we consider the value $\Xi^{*}(\gamma)$. Let us prove that $\Xi^{*}(\gamma) = 0$, if $Pr_{X \times S}(\gamma) \le \pi^{+}$ and $Pr_{X \times S}(\gamma) \le \pi^{-}$, 
and $\Xi^{*}(\gamma) = +\infty$ otherwise. 

We have
\begin{multline*}
\Xi^{*}(\gamma) = \sup_{u \in U} \Bigl(\int_{X \times Y \times S} u d\gamma - \Xi(u)\Bigr) = \\ = \sup \Bigl\{\int_{X \times Y \times S} (f(x, s) + g(y, s)) d \gamma - 
\int_{X \times S} f(x, s) d \pi^+ - \int_{Y \times S} g(y, s) d \pi^{-}: \\
f \in C_b(X \times S), g \in C_b(Y \times S), f \ge 0, g \ge 0 \Bigr\}.
\end{multline*}

If the inequality $Pr_{X \times S}(\gamma) \le \pi^{+}$ is not satisfied, then there exists a function $f \in C_b(X \times S)$, $f \ge 0$, such that
$$
\int_{X \times Y \times S} f(x, s) d\gamma > \int_{X \times S} f(x, s) d\pi^{+}. 
$$
Then for the function $u(x, y, s) = t \, f(x, s)$, where $t > 0$, we obtain
$$
\int_{X \times Y \times S} u d\gamma - \Xi(u) \ge t \Bigl( \int_{X \times Y \times S} f(x, s) d\gamma - \int_{X \times S} f(x, s) d\pi^{+} \Bigr) \to +\infty, \quad t \to +\infty. 
$$
Thus, $\Xi^{*}(\gamma) = +\infty$. Similarly $\Xi^{*}(\gamma) = +\infty$, if the inequality $Pr_{Y \times S}(\gamma) \le \pi^{-}$ is not satisfied. If $Pr_{X \times S}(\gamma) \le \pi^{+}$ and $Pr_{Y \times S}(\gamma) \le \pi^{-}$, then
$$
\sup_{u \in U} \Bigl(\int_{X \times Y \times S} u d\gamma - \Xi(u)\Bigr) = 0.
$$

Therefore, $-\Theta^*(-\gamma) - \Xi^{*}(\gamma) = -\infty$, if $\gamma \notin \Gamma(\pi^{+}, \pi^{-}, c)$, and
$$-\Theta^*(-\gamma) - \Xi(\gamma) = \int_{X \times Y \times S} d\gamma,$$ if
$\gamma \in \Gamma(\pi^{+}, \pi^{-}, c)$.

Thus, 
$$
\max_{\gamma \in \mathcal M(X \times Y \times S)} (-\Theta^*(-\gamma) - \Xi^*(\gamma)) =
\max_{\gamma \in \Gamma(\pi^{+}, \pi^{-}, c)} \int_{X \times Y \times S} d\gamma. 
$$

In the general case of completely regular topological spaces we consider the Stone-Cech compactifications 
$\beta X = \beta Y$ and $\beta S$ of the spaces $X = Y$ and $S$. For compact spaces the duality formula is already proved. 
Note that every measure $\gamma \in \mathcal M_{+}(\beta X \times \beta Y \times \beta S)$ satisfying the conditions  $Pr_{\beta X \times \beta S}(\gamma) \le \pi^{+}$ and $Pr_{\beta Y \times \beta S} (\gamma) \le \pi^{-}$, 
is concentrated on $X \times Y \times S$. Therefore, the maximum in the primal problem over measures $\gamma \in \mathcal M_{+}(\beta X \times \beta Y \times \beta S)$ is equal to the maximum in the primal problem over measures
$\gamma \in \mathcal M_{+}(X \times Y \times S)$. This implies the duality formula for the initial spaces $X = Y$ and $S$. 

\end{proof}

\section{Reduction to a two-dimensional optimization problem with constraints}

In this section we reduce the problem (\ref{prob_cont})-(\ref{eq}) to a two-dimensional optimization problem. 
Further without limitation of generality we assume that $c(s) = 1$ for all $s \in S$. 
Let $\gamma \in \Gamma(\pi^{+}, \pi^{-}, c)$. 
Denote by
$$
\sigma^{+} = Pr_{X \times S}(\gamma), \quad \sigma^{-} = Pr_{Y \times S}(\gamma) 
$$
the projections of the measure $\gamma$ on $X \times S$ and $Y \times S$ respectively. 
By virtue of (\ref{ineq}) we have
$$
\sigma^{+} \le \pi^{+}, \quad \sigma^{-} \le \pi^{-}.
$$
The balance condition (\ref{eq}) implies that $Pr_{X}(\gamma) = Pr_{Y}(\gamma)$.
Therefore, the measures $\sigma^{+}$ and $\sigma^{-}$ satisfy the properties
$$
Pr_{X}(\sigma^{+}) = Pr_{Y}(\sigma^{-}), \quad Pr_{S}(\sigma^{+}) = Pr_{S}(\sigma^{-}).
$$

Consider the following optimization problem for nonnegative measures $\sigma^{+} \in \mathcal M_{+}(X \times S)$, 
$\sigma^{-} \in \mathcal M_{+}(Y \times S)$: 
we aim to maximize the value 
\begin{equation} \label{prob_2d}
\sigma^{+}(X \times S) \to \max 
\end{equation}
under the condition that
\begin{equation} \label{ineq2}
\sigma^{+} \le \pi^{+}, \quad \sigma^{-} \le \pi^{-}, 
\end{equation}
\begin{equation} \label{eq2}
Pr_{X}(\sigma^{+}) = Pr_{Y}(\sigma^{-}), \quad Pr_{S}(\sigma^{+}) = Pr_{S}(\sigma^{-}).
\end{equation}
 
\begin{theorem} \label{th_equiv}
Let $X = Y$ and $S$ be completely regular topological spaces, $\pi^{+} \in \mathcal M_{+}(X \times S)$
and $\pi^{-}\in \mathcal M_{+}(Y \times S)$ be Radon nonnegative measures. 
Then the maximum in the problem (\ref{prob_cont})-(\ref{eq}) is equal to the maximum in the problem (\ref{prob_2d})-(\ref{eq2}). 
For every optimal solution $(\sigma^+, \sigma^{-})$ to the problem (\ref{prob_2d})-(\ref{eq2}) 
there exists an optimal solution $\gamma$ to the problem (\ref{prob_cont}) such that $Pr_{X \times S}(\gamma) = \sigma^{+}$  and $Pr_{Y \times S}(\gamma) = \sigma^{-}$. 
\end{theorem}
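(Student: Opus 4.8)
The plan is to prove equality of the two optimal values by establishing the two inequalities separately, and to read off the correspondence of optimal solutions from the constructions used. For the inequality $\le$, take any $\gamma\in\Gamma(\pi^{+},\pi^{-},c)$ (throughout this section $c\equiv 1$) and put $\sigma^{+}=Pr_{X\times S}(\gamma)$, $\sigma^{-}=Pr_{Y\times S}(\gamma)$. As was noted just before the statement of the theorem, the constraints (\ref{ineq}) and (\ref{eq}) force $(\sigma^{+},\sigma^{-})$ to satisfy (\ref{ineq2}) and (\ref{eq2}); moreover $\int_{X\times Y\times S}d\gamma=\gamma(X\times Y\times S)=\sigma^{+}(X\times S)$, so the value of $\gamma$ in (\ref{prob_cont}) coincides with the value of $(\sigma^{+},\sigma^{-})$ in (\ref{prob_2d}). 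Hence the supremum in (\ref{prob_cont})--(\ref{eq}) does not exceed the supremum in (\ref{prob_2d})--(\ref{eq2}); by Theorem \ref{th_exist} the former is in fact a maximum.

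For the reverse inequality, let $(\sigma^{+},\sigma^{-})$ be any pair satisfying (\ref{ineq2}) and (\ref{eq2}). Since $\sigma^{+}\le\pi^{+}$ and $\sigma^{-}\le\pi^{-}$, both measures are Radon, and by the second equality in (\ref{eq2}) their projections on $S$ coincide. I would then apply the gluing lemma with $X_1=X$, $X_2=S$, $X_3=Y$, taking $\mu_{1,2}=\sigma^{+}$ and $\mu_{2,3}$ equal to the image of $\sigma^{-}$ under the coordinate swap $(y,s)\mapsto(s,y)$; reordering the resulting Radon measure on $X\times S\times Y$ yields $\gamma\in\mathcal M_{+}(X\times Y\times S)$ with $Pr_{X\times S}(\gamma)=\sigma^{+}$ and $Pr_{Y\times S}(\gamma)=\sigma^{-}$. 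Then (\ref{ineq}) holds by (\ref{ineq2}), and the balance condition (\ref{eq}) holds because $Pr_{X}(\gamma)=Pr_{X}(\sigma^{+})=Pr_{Y}(\sigma^{-})=Pr_{Y}(\gamma)$ by the first equality in (\ref{eq2}) (recall $c\equiv 1$); thus $\gamma\in\Gamma(\pi^{+},\pi^{-},c)$. Since $\int_{X\times Y\times S}d\gamma=\sigma^{+}(X\times S)$, the value of $(\sigma^{+},\sigma^{-})$ in (\ref{prob_2d}) equals the value of $\gamma$ in (\ref{prob_cont}), which gives the inequality $\ge$ and hence the equality of the two optimal values.

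The correspondence of optimal solutions is now immediate. Projecting a primal-optimal measure (which exists by Theorem \ref{th_exist}) gives an admissible pair of value equal to the common optimal value, so the supremum in (\ref{prob_2d})--(\ref{eq2}) is attained; and if $(\sigma^{+},\sigma^{-})$ is any optimal pair, the measure $\gamma$ built from it in the previous paragraph lies in $\Gamma(\pi^{+},\pi^{-},c)$, has value $\sigma^{+}(X\times S)$ equal to the common optimal value, hence is optimal in (\ref{prob_cont}), with $Pr_{X\times S}(\gamma)=\sigma^{+}$ and $Pr_{Y\times S}(\gamma)=\sigma^{-}$ by construction. The only step requiring real care is the lifting: one must verify that the hypotheses of the gluing lemma are met — this is precisely the equality of the $S$-marginals in (\ref{eq2}) — and that permuting coordinates preserves both Radonness and the prescribed marginals. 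If one prefers to avoid the abstract lemma, then whenever $\sigma^{-}$ disintegrates over $S$ as $\sigma^{-}(dy\,ds)=(\sigma^{-})^{s}(dy)\,\lambda(ds)$ with $\lambda=Pr_{S}(\sigma^{+})=Pr_{S}(\sigma^{-})$ (for instance when $S$ is Souslin), one can simply take $\gamma(dx\,dy\,ds)=(\sigma^{-})^{s}(dy)\,\sigma^{+}(dx\,ds)$ and check the marginals directly.
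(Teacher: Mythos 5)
Your proposal is correct and follows essentially the same route as the paper: project an admissible $\gamma$ to get an admissible pair for one inequality, and glue $\sigma^{+}$ with $\sigma^{-}$ over their common $S$-marginal (via the gluing lemma) to lift an admissible pair back to an admissible $\gamma$ with the same value for the other. You are merely more explicit than the paper about the coordinate permutation needed to apply the gluing lemma and about the attainment of the maximum in (\ref{prob_2d})--(\ref{eq2}), which is a welcome addition rather than a deviation.
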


\begin{proof}
Note that for given two measures $\sigma^{+} \in \mathcal M_{+}(X \times S)$, 
$\sigma^{-} \in \mathcal M_{+}(Y \times S)$ satisfying the conditions (\ref{ineq2}) and (\ref{eq2}), 
with the use of gluing lemma we can construct a measure $\gamma \in \mathcal M_{+}(X \times Y \times S)$ such that 
$Pr_{X \times S}(\gamma) = \sigma^{+}$ and $Pr_{Y \times S}(\gamma) = \sigma^{-}$. 
Then
$$
Pr_{X}(\gamma) = Pr_{X} (\sigma^{+}) = Pr_{Y}(\sigma^{-}) = Pr_{Y}(\gamma)
$$ 
and, therefore, $\gamma \in \Gamma(\pi^{+}, \pi^{-}, c)$. 
This implies that the maximum in the problem (\ref{prob_cont})-(\ref{eq}) is equal to the maximum in the problem (\ref{prob_2d})-(\ref{eq2}), and moreover, a measure $\gamma$ is optimal for the problem (\ref{prob_cont})-(\ref{eq}) 
if and only if measures $\sigma^{+}$, $\sigma^{-}$ form an optimal solution to the problem (\ref{prob_2d})-(\ref{eq2}). 
\end{proof}

Thus, the initial (three-dimensional) optimization problem for a measure $\gamma \in \mathcal M_{+}(X \times Y \times S)$ 
can be reduced to (two-dimensional) optimization problem for measures $\sigma^{+}$, $\sigma^{-}$. \\

Note that if measures $\sigma^{+}$ and $\sigma^{-}$ satisfy the conditions (\ref{ineq2}) and (\ref{eq2}), then 
$Pr_{X}(\sigma^{+})  \le Pr_{X}(\pi^{+}) \land Pr_{Y}(\pi^{-})$ and
$Pr_{S}(\sigma^{+})  \le Pr_{S}(\pi^{+}) \land Pr_{S}(\pi^{-})$. 

Denote
$$
\widehat \mu = Pr_{X}(\pi^{+}) \land Pr_{Y}(\pi^{-}), \quad \widehat \nu = Pr_{S}(\pi^{+}) \land Pr_{S}(\pi^{-}). 
$$
Therefore, we obtain

\begin{corollary}
The maximum in the problem (\ref{prob_cont})-(\ref{eq}) does not exceed $\min(\widehat \mu(X), \widehat \nu(S))$.
\end{corollary}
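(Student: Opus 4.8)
The plan is to derive the bound directly from Theorem~\ref{th_equiv} together with the elementary observation recorded immediately before the corollary. By Theorem~\ref{th_equiv} the maximum in (\ref{prob_cont})-(\ref{eq}) equals the maximum of $\sigma^{+}(X\times S)$ over all pairs $(\sigma^{+},\sigma^{-})$ admissible for (\ref{prob_2d})-(\ref{eq2}), so it suffices to show that $\sigma^{+}(X\times S)\le\min(\widehat\mu(X),\widehat\nu(S))$ for every such pair.

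First I would rewrite the quantity to be estimated as $\sigma^{+}(X\times S)=Pr_{X}(\sigma^{+})(X)=Pr_{S}(\sigma^{+})(S)$, using that $Pr_{X}(\sigma^{+})$ and $Pr_{S}(\sigma^{+})$ are projections of $\sigma^{+}$ and hence have the same total mass. For the first of these, $\sigma^{+}\le\pi^{+}$ gives $Pr_{X}(\sigma^{+})\le Pr_{X}(\pi^{+})$, while the equality $Pr_{X}(\sigma^{+})=Pr_{Y}(\sigma^{-})$ from (\ref{eq2}) together with $\sigma^{-}\le\pi^{-}$ gives $Pr_{X}(\sigma^{+})\le Pr_{Y}(\pi^{-})$. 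I would then invoke the fact that for nonnegative Radon measures $\mu\land\nu$ is the largest measure dominated simultaneously by $\mu$ and by $\nu$ — which follows at once by restricting to the two sets of a Hahn decomposition of $\mu-\nu$ — to conclude $Pr_{X}(\sigma^{+})\le Pr_{X}(\pi^{+})\land Pr_{Y}(\pi^{-})=\widehat\mu$, hence $\sigma^{+}(X\times S)\le\widehat\mu(X)$.

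The bound by $\widehat\nu(S)$ is obtained symmetrically: $\sigma^{+}\le\pi^{+}$ yields $Pr_{S}(\sigma^{+})\le Pr_{S}(\pi^{+})$, the second equality in (\ref{eq2}) and $\sigma^{-}\le\pi^{-}$ yield $Pr_{S}(\sigma^{+})=Pr_{S}(\sigma^{-})\le Pr_{S}(\pi^{-})$, and hence $Pr_{S}(\sigma^{+})\le\widehat\nu$, i.e. $\sigma^{+}(X\times S)\le\widehat\nu(S)$. Taking the minimum of the two estimates and using Theorem~\ref{th_equiv} finishes the proof. I do not anticipate any real obstacle here; the only point deserving a line of justification is the extremal characterization of $\mu\land\nu$, and even that is routine, so I expect the argument to be very short.
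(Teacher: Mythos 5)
Your proposal is correct and follows the same route as the paper: the paper derives this corollary immediately from Theorem~\ref{th_equiv} together with the observation, stated just before the corollary, that any admissible pair satisfies $Pr_{X}(\sigma^{+})\le\widehat\mu$ and $Pr_{S}(\sigma^{+})\le\widehat\nu$. Your only addition is to spell out the extremal property of $\mu\land\nu$ via the Hahn decomposition, which the paper leaves implicit and which you justify correctly.
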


Furthermore, Theorem \ref{th_equiv} implies a necessary and sufficient condition under which it is possible to construct a measure $\gamma$ satisfying the demand and supply for all trading participants. 

\begin{corollary}
The problem (\ref{prob_cont})-(\ref{eq}) has an optimal solution for which the inequalities (\ref{eq}) become equalities  
(that is, the demand and supply of every participant are completely satisfied) if and only if measures 
$\pi^{+}$ and $\pi^{-}$ have the same projections:
$$
Pr_{X}(\pi^{+}) = Pr_{Y}(\pi^{-}), \quad  Pr_{S}(\pi^{+}) = Pr_{S}(\pi^{-}). 
$$
\end{corollary}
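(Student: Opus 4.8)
Since the present section works under the convention $c(s)\equiv 1$, the balance condition~(\ref{eq}) amounts to the equality $Pr_X(\gamma)=Pr_Y(\gamma)$ of Radon measures on $X=Y$, and the assertion to be proved concerns an optimal $\gamma$ for which the density constraints~(\ref{ineq}) hold with equality, that is, $Pr_{X\times S}(\gamma)=\pi^{+}$ and $Pr_{Y\times S}(\gamma)=\pi^{-}$. The plan is to read off both implications from Theorem~\ref{th_equiv}, the key observation being that, under the stated hypotheses, the pair $(\pi^{+},\pi^{-})$ is itself an admissible --- and hence automatically optimal --- solution of the two-dimensional problem~(\ref{prob_2d})--(\ref{eq2}).

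For necessity I would start from an arbitrary $\gamma\in\Gamma(\pi^{+},\pi^{-},c)$ with $Pr_{X\times S}(\gamma)=\pi^{+}$ and $Pr_{Y\times S}(\gamma)=\pi^{-}$ and project these equalities one step further onto the individual factors: projecting onto $X$ and onto $Y$ gives $Pr_X(\pi^{+})=Pr_X(\gamma)$ and $Pr_Y(\pi^{-})=Pr_Y(\gamma)$, while projecting onto $S$ gives $Pr_S(\pi^{+})=Pr_S(\gamma)=Pr_S(\pi^{-})$. Combining the first two with the balance condition $Pr_X(\gamma)=Pr_Y(\gamma)$ yields $Pr_X(\pi^{+})=Pr_Y(\pi^{-})$, which together with $Pr_S(\pi^{+})=Pr_S(\pi^{-})$ is precisely the asserted condition.

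For sufficiency, assuming $Pr_X(\pi^{+})=Pr_Y(\pi^{-})$ and $Pr_S(\pi^{+})=Pr_S(\pi^{-})$, I would take $\sigma^{+}=\pi^{+}$ and $\sigma^{-}=\pi^{-}$. Then~(\ref{ineq2}) holds trivially, as equalities, and~(\ref{eq2}) holds by hypothesis, so $(\pi^{+},\pi^{-})$ is admissible in~(\ref{prob_2d})--(\ref{eq2}); since every admissible $\sigma^{+}$ satisfies $\sigma^{+}\le\pi^{+}$, and hence $\sigma^{+}(X\times S)\le\pi^{+}(X\times S)$, the value $\pi^{+}(X\times S)$ delivered by this pair is the largest possible, so $(\pi^{+},\pi^{-})$ is an optimal solution of~(\ref{prob_2d})--(\ref{eq2}). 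Theorem~\ref{th_equiv} then produces an optimal solution $\gamma$ of~(\ref{prob_cont}) with $Pr_{X\times S}(\gamma)=\pi^{+}$ and $Pr_{Y\times S}(\gamma)=\pi^{-}$, i.e.\ an optimal $\gamma$ for which~(\ref{ineq}) becomes a pair of equalities.

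I do not expect a real obstacle here: essentially the whole content is carried by Theorem~\ref{th_equiv}, whose proof already builds the required $\gamma$ by the gluing lemma. The points deserving a little care are only the bookkeeping around the convention $c\equiv 1$ (which is exactly what makes the balance condition read $Pr_X(\gamma)=Pr_Y(\gamma)$) and the remark that feasibility of $(\pi^{+},\pi^{-})$ in the two-dimensional problem already forces its optimality; for a genuinely non-constant cost one would, after the substitution $\tilde\gamma=c(s)\gamma$ employed in Theorem~\ref{th_exist}, phrase the equivalent condition in terms of the $c$-weighted projections of $\pi^{+}$ and $\pi^{-}$.
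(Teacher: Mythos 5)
Your argument is correct and follows exactly the route the paper intends: the corollary is stated without proof as an immediate consequence of Theorem~\ref{th_equiv}, and your elaboration (feasibility of the pair $(\pi^{+},\pi^{-})$ in the two-dimensional problem forces its optimality, after which Theorem~\ref{th_equiv} supplies the required $\gamma$; the converse by projecting the equalities onto the factors and using the balance condition) is precisely that consequence spelled out. Your side remarks --- that the ``inequalities (\ref{eq})'' in the statement should read (\ref{ineq}), and that for non-constant $c$ the condition should be phrased via $c$-weighted projections --- are accurate and consistent with the section's standing convention $c\equiv 1$.
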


Consider the following example where is is possible to find the optimal solution to the problem (\ref{prob_cont})-(\ref{eq}) in an explicit form. 

\begin{example}
{\rm 
Let $X = Y = S = [0, 1]$. Let $\pi^{+}$ be Lebesgue measure on the domain  
$$D = \{(x, s) \in [0, 1]^2: x/2 \le s \le (x+1)/2\},$$  
and let $\pi^{-}$ be Lebesgue measure on the domain $[0, 1]^2 \setminus D$. 
Then
$$Pr_X(\pi^{+}) = Pr_Y(\pi^{-}) = 1/2 \lambda,$$ 
$$Pr_S(\pi^+) = (1 - |2 s - 1|) \lambda, \quad Pr_S(\pi^{-}) = |2s - 1| \lambda, $$
where $\lambda$ is Lebesgue measure on the interval $[0, 1]$. 
We have
$$
\widehat \mu = 1/2 \lambda, \quad \widehat \nu = \min(1 - |2s - 1|, |2s - 1|) \lambda. 
$$
Then $\widehat \nu(S) = 1/4$. 
It is easy to see that there exist measures $\sigma^+ \le \pi^+$ and $\sigma^- \le \pi^-$ with common projections 
$|x - 1/2| \lambda$ and $\widehat \nu$ on the spaces $X$ and $S$ respectively:
$\sigma^+$ is Lebesgue measure on the domain $D \setminus \tilde D$ and $\sigma^-$ is Lebesgue measure on the domain 
$\tilde D \setminus D$, where $\tilde D$ is the image of the domain $D$ under the reflection in the line $x = 1/2$. 
We can construct the optimal measure $\gamma$ on $X \times Y \times S$: the measure $\gamma$ is concentrated on the set
$\{y = 1 - x\}$ and has projections $\sigma^+$ and $\sigma^-$ on $X \times S$ and $Y \times S$ respectively
(we can define $\gamma$ as the image of the measure $\sigma^+$ under the mapping $(x, s) \mapsto (x, 1 - x, s)$). 
That is, the participants $x$ and $1 - x$ (where $x \in [0, 1/2]$) exchange goods with each other: the participant $x$ 
gives the participant $1 - x$ goods $s$, where $x/2 \le s \le (1 - x)/2$, and receives from the participant $1 - x$ goods
$s$, where $1 - x/2 \le s \le (x + 1)/2$.  
}
\end{example}
 
Now we investigate the optimization problem (\ref{prob_2d})-(\ref{eq2}). Denote
$$
\mu = Pr_X(\sigma^{+}) = Pr_Y(\sigma^{-}), \quad \nu = Pr_S(\sigma^{+}) = Pr_S(\sigma^{-}). 
$$ 

In order to solve the problem (\ref{prob_2d})-(\ref{eq2}) we need to find measures $\mu \in \mathcal M_{+}(X)$ and $\nu \in \mathcal M_{+}(S)$ with the maximum possible mass $\mu(X) = \nu(S)$, such that there exist measures 
$\sigma^{+}, \sigma^{-} \in \Pi(\mu, \nu)$ satisfying the conditions $\sigma^{+} \le \pi^{+}$ and $\sigma^{-} \le \pi^{-}$. 

The problem of optimization over the set of measures $\sigma$ with given projections $\mu$ and $\nu$ and an additional constraint $\sigma \le \pi$ has been considered in \cite{KM2}, \cite{KM3}, \cite{P24}. 
In \cite{KM3} the following necessary and sufficient condition for the existence of a measure $\sigma \le \pi$ with given projections $\mu$ and $\nu$ has been formulated. 

\begin{theorem} [\cite{KM3}] \label{th_exist1}
For given measures $\mu \in \mathcal M_{+}(X)$, $\nu \in \mathcal M_{+}(S)$ and $\pi \in \mathcal M_{+}(X \times S)$ 
there exists a measure $\sigma \in \mathcal M_{+}(X \times S)$ such that $Pr_{X}(\sigma) = \mu$, $Pr_{S}(\sigma) = \nu$ and $\sigma \le \pi$ if and only if
$$
\int_X u(x) \mu(dx) + \int_S v(s) \nu(ds) \le \int_{X \times S} [u(x) + v(s)]_{+} \pi(dx \, ds)
$$
for all functions $u \in C_b(X)$, $v \in C_b(S)$, where $[\cdot]_{+} = \max(\cdot, 0)$. 
\end{theorem}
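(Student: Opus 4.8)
The plan is to mirror the scheme of Theorem \ref{th_dual}: reduce to compact spaces, apply the Fenchel-Rockafellar theorem, and close the argument with a one-line substitution in the hypothesis. The necessity direction is immediate: if $\sigma \ge 0$ has marginals $\mu$, $\nu$ and $\sigma \le \pi$, then
$$
\int_X u\, d\mu + \int_S v\, d\nu = \int_{X \times S} (u(x) + v(s))\, d\sigma \le \int_{X \times S} [u(x) + v(s)]_{+}\, d\sigma \le \int_{X \times S} [u(x) + v(s)]_{+}\, d\pi,
$$
using $[u(x)+v(s)]_{+} \ge 0$ and $\sigma \le \pi$. For the converse I would first note that the constant choices $u \equiv \pm 1$, $v \equiv \mp 1$ force $\mu(X) = \nu(S) =: m$. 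Since every $\sigma$ with $\sigma \le \pi$, $Pr_X(\sigma) \le \mu$, $Pr_S(\sigma) \le \nu$ obeys $\sigma(X \times S) = Pr_X(\sigma)(X) \le m$, while a $\sigma$ attaining $\sigma(X \times S) = m$ automatically has $Pr_X(\sigma) = \mu$ and $Pr_S(\sigma) = \nu$ (a nonnegative measure dominated by $\mu$, resp. $\nu$, with full mass $m$ equals it), it suffices to prove that
$$
V := \sup\{\sigma(X \times S) : \sigma \in \mathcal M_{+}(X \times S),\ \sigma \le \pi,\ Pr_X(\sigma) \le \mu,\ Pr_S(\sigma) \le \nu\}
$$
equals $m$.

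For compact $X$ and $S$ I would take $U = C(X \times S)$, so that $U^{*} = \mathcal M(X \times S)$, and introduce two convex functionals on $U$: $\Theta(w) = 0$ if $w \ge 1$ pointwise on $X \times S$ and $\Theta(w) = +\infty$ otherwise; and $\Xi(w)$ equal to the infimum of $\int \varphi\, d\pi + \int u\, d\mu + \int v\, d\nu$ over nonnegative $\varphi \in C(X \times S)$, $u \in C(X)$, $v \in C(S)$ with $w(x,s) = \varphi(x,s) + u(x) + v(s)$ (and $+\infty$ if no such decomposition exists). A constant $w_0 > 1$ makes $\Theta$ finite and continuous at $w_0$ and $\Xi(w_0)$ finite, so Fenchel-Rockafellar applies. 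The conjugate computation, parallel to the one in Theorem \ref{th_dual}, should give $-\Theta^{*}(-\sigma) = \sigma(X \times S)$ for $\sigma \ge 0$ (and $-\infty$ otherwise), and $\Xi^{*}(\sigma) = 0$ exactly when $\sigma \le \pi$, $Pr_X(\sigma) \le \mu$, $Pr_S(\sigma) \le \nu$ (and $+\infty$ otherwise); so the dual side of the Fenchel-Rockafellar identity is $V$ and is attained. Optimizing the decomposition defining $\Xi$ over $w \ge 1$ (for fixed $u, v \ge 0$ the best $\varphi$ is the continuous function $[1 - u(x) - v(s)]_{+}$) rewrites the primal side as
$$
V = \inf\Bigl\{\int_{X \times S} [1 - u(x) - v(s)]_{+}\, d\pi + \int_X u\, d\mu + \int_S v\, d\nu : u \in C_b(X),\ v \in C_b(S),\ u, v \ge 0\Bigr\}.
$$

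The decisive and short step is to bound this infimum below by $m$. For nonnegative $u, v$ I would apply the hypothesis to the continuous functions $u' = 1 - u$ and $v' = -v$; since $u'(x) + v'(s) = 1 - u(x) - v(s)$ this reads
$$
m - \int_X u\, d\mu - \int_S v\, d\nu = \int_X u'\, d\mu + \int_S v'\, d\nu \le \int_{X \times S} [1 - u(x) - v(s)]_{+}\, d\pi,
$$
i.e. every term of the infimum is at least $m$, hence $V \ge m$; with $V \le m$ this yields $V = m$, and the attained maximizer is the required $\sigma$. To remove compactness I would pass to the Stone-Cech compactifications $\beta X$, $\beta S$: the finite Radon measures $\mu$, $\nu$, $\pi$ are carried by $\sigma$-compact (hence Borel) subsets of $X$, $S$, $X \times S$ and extend to Radon measures on $\beta X$, $\beta S$, $\beta X \times \beta S$ concentrated on those subsets; because $C_b(X)$ is precisely the set of restrictions to $X$ of functions in $C(\beta X)$, and likewise for $S$, the hypothesis for $(\mu, \nu, \pi)$ is equivalent to the hypothesis for the extended data. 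The compact case then produces $\bar\sigma \le \bar\pi$ with the correct marginals; $\bar\sigma$ is automatically concentrated on $X \times S$ because $\bar\sigma \le \bar\pi$, and its restriction — Radon on $X \times S$ since dominated by $\pi$ — is the desired measure.

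I expect the main difficulty to be technical rather than conceptual: carefully verifying the two conjugates $\Theta^{*}$, $\Xi^{*}$ in the compact case (this is where the structure of the problem really enters, and the three-way decomposition $w = \varphi + u + v$ must be handled with care), together with the measure-theoretic bookkeeping in the Stone-Cech step, namely that all measures in sight are carried by Borel subsets of the compactifications and that domination by a Radon measure survives extension and restriction. The algebraic heart of the proof, by contrast, is just the substitution $u' = 1 - u$, $v' = -v$.
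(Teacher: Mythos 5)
The paper does not actually prove Theorem~\ref{th_exist1}: it is quoted from \cite{KM3} without proof, so there is no in-paper argument to compare yours against. Your proof is correct and self-contained, and it deliberately reuses the Fenchel--Rockafellar/Stone--\v{C}ech template that the paper itself employs for Theorem~\ref{th_dual}, which is a different (and arguably more uniform, in the context of this paper) route than the one in \cite{KM3}. The individual steps all check out: necessity is the standard two-line estimate; the constants $u\equiv\pm1$, $v\equiv\mp1$ do force $\mu(X)=\nu(S)=m$; a feasible $\sigma$ of full mass $m$ automatically has exact marginals because a nonnegative measure dominated by $\mu$ with equal total mass equals $\mu$; the conjugate computations give $-\Theta^{*}(-\sigma)=\sigma(X\times S)$ on nonnegative measures and $\Xi^{*}(\sigma)=0$ exactly on the feasible set (the three-way decomposition causes no trouble, since the supremum over $w$ and over decompositions of $w$ is just a supremum over independent nonnegative $\varphi,u,v$); attainment of the dual maximum is part of the Fenchel--Rockafellar statement; the elimination of $\varphi$ via $\varphi=[1-u-v]_{+}$ is legitimate because that function is continuous and pointwise minimal among admissible $\varphi$; and the substitution $u'=1-u$, $v'=-v$ does bound every term of the primal infimum below by $m$. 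The Stone--\v{C}ech reduction is handled with the right amount of care (measures carried by $\sigma$-compact Borel subsets, $C_b(X)=C(\beta X)|_{X}$ so the hypothesis transfers both ways, $\bar\sigma\le\bar\pi$ forces concentration on $X\times S$, and $\sigma\le\pi$ gives $\sigma=f\cdot\pi$, hence Radon). What your approach buys is a proof of the cited existence criterion by exactly the machinery already set up in Section 2, making the paper essentially self-contained; what it costs is nothing beyond the bookkeeping you already identified.
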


Using Theorem \ref{th_exist1}, we can obtain the following formula for the optimal value in the problem (\ref{prob_cont})-({\ref{eq}). 

\begin{theorem} \label{th_opt}
Let $X$ and $S$ be completely regular topological spaces, let $\pi^{+} \in \mathcal M_{+}(X \times S)$ and $\pi^{-}\in \mathcal M_{+}(X \times S)$ be Radon nonnegative measures. 
Then the maximum in the problem (\ref{prob_cont})-(\ref{eq}) is equal to 
\begin{equation} \label{opt}
 \inf_{u \in C_b(X), v \in C_b(S)} \Bigl\{\int_{X \times S} [u(x) + v(s)]_{+} \pi^{+}(dx \, ds) + \int_{X \times S} [1 - u(x) - v(s)]_{+} \pi^{-}(dx \, ds) \Bigr\}. 
\end{equation}
\end{theorem}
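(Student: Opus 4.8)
The plan is to use Theorem~\ref{th_equiv}, which identifies the maximum in~(\ref{prob_cont})--(\ref{eq}) with the maximum of the two-dimensional problem~(\ref{prob_2d})--(\ref{eq2}), and to show that the latter equals~(\ref{opt}). The inequality ``$\le$'' is elementary. Let $(\sigma^{+},\sigma^{-})$ be admissible for~(\ref{prob_2d})--(\ref{eq2}) with common marginals $\mu$ on $X$ and $\nu$ on $S$, so $\sigma^{+}(X\times S)=\sigma^{-}(X\times S)$. For any $u\in C_b(X)$, $v\in C_b(S)$, using $\int_{X\times S}(u(x)+v(s))\,d\sigma^{\pm}=\int u\,d\mu+\int v\,d\nu$, the bounds $\sigma^{\pm}\le\pi^{\pm}$, and $[t]_{+}\ge t$, $[t]_{+}\ge0$,
\begin{multline*}
\sigma^{+}(X\times S)=\int_{X\times S}(u(x)+v(s))\,d\sigma^{+}+\int_{X\times S}(1-u(x)-v(s))\,d\sigma^{-}\\
\le\int_{X\times S}[u+v]_{+}\,d\pi^{+}+\int_{X\times S}[1-u-v]_{+}\,d\pi^{-}.
\end{multline*}
Taking the supremum over admissible pairs and the infimum over $u,v$ gives, via Theorem~\ref{th_equiv}, that the maximum in~(\ref{prob_cont})--(\ref{eq}) is at most~(\ref{opt}).

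For the reverse inequality I would prove that the maximum in~(\ref{prob_2d})--(\ref{eq2}) equals~(\ref{opt}) by a Fenchel--Rockafellar argument modelled on the proof of Theorem~\ref{th_dual}, first for compact $X=Y$ and $S$. (Theorem~\ref{th_exist1} is the reason why single-variable potentials $u(x)+v(s)$ are the correct dual objects: the only coupling between $\sigma^{+}$ and $\sigma^{-}$ is through their marginals.) Work on $U=C(X\times S)\times C(X\times S)$, with $U^{*}=\mathcal M(X\times S)\times\mathcal M(X\times S)$ by the Riesz theorem; a pair $(\sigma^{+},\sigma^{-})\in U^{*}$ is the natural variable of~(\ref{prob_2d})--(\ref{eq2}). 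For $(a,b)\in U$ set $\Theta(a,b)=0$ if there exist $\phi\in C(X)$, $\psi\in C(S)$ with $a(x,s)\ge 1+\phi(x)+\psi(s)$ and $b(x,s)\ge-\phi(x)-\psi(s)$ for all $x,s$, and $\Theta(a,b)=+\infty$ otherwise; set $\Xi(a,b)=\int_{X\times S}a\,d\pi^{+}+\int_{X\times S}b\,d\pi^{-}$ if $a\ge0$ and $b\ge0$, and $\Xi(a,b)=+\infty$ otherwise. Both are convex, and at the constant pair $(a_{0},b_{0})=(2,1)$ one has $\Theta(a_{0},b_{0})=0$ (with $\Theta$ continuous there, as it vanishes on a sup-norm ball) and $\Xi(a_{0},b_{0})<+\infty$, so the Fenchel--Rockafellar theorem applies. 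Minimizing $\Theta+\Xi$ one takes $a=[1+\phi+\psi]_{+}$, $b=[-\phi-\psi]_{+}$, and the substitution $u=1+\phi$, $v=\psi$ identifies $\inf_{U}(\Theta+\Xi)$ with~(\ref{opt}).

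It remains to compute the conjugates. Arguing as in the proof of Theorem~\ref{th_dual}, one shows that $-\Theta^{*}(-\sigma^{+},-\sigma^{-})$ equals $\sigma^{+}(X\times S)$ when $\sigma^{+},\sigma^{-}\ge0$ and $Pr_{X}(\sigma^{+})=Pr_{X}(\sigma^{-})$, $Pr_{S}(\sigma^{+})=Pr_{S}(\sigma^{-})$, and $-\infty$ otherwise: if a marginal equality fails, a suitably scaled $\phi$ or $\psi$ drives the infimum to $-\infty$, and if $\sigma^{+}$ or $\sigma^{-}$ is not nonnegative, replacing $a$ by $a+t\varphi$ (respectively $b$ by $b+t\varphi$) for a nonnegative $\varphi$ with negative integral and letting $t\to+\infty$ does so. Likewise $\Xi^{*}(\sigma^{+},\sigma^{-})=0$ when $\sigma^{+}\le\pi^{+}$ and $\sigma^{-}\le\pi^{-}$, and $+\infty$ otherwise. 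Hence the Fenchel--Rockafellar maximum equals the maximum of $\sigma^{+}(X\times S)$ over the admissible set of~(\ref{prob_2d})--(\ref{eq2}), which proves the theorem for compact spaces.

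For general completely regular $X=Y$ and $S$ I would pass to the Stone--\v{C}ech compactifications $\beta X$ and $\beta S$, exactly as in the last step of the proof of Theorem~\ref{th_dual}: regarded as Radon measures on $\beta X\times\beta S$, the measures $\pi^{+}$ and $\pi^{-}$ are concentrated on $X\times S$, so every admissible pair for the compactified two-dimensional problem is concentrated on $X\times S$ and the primal maximum is unchanged, while $C_b(X)=C(\beta X)$ and $C_b(S)=C(\beta S)$ show that the infimum~(\ref{opt}) is unchanged as well. Combined with Theorem~\ref{th_equiv}, this gives the assertion. The step I expect to require the most care is the conjugate computation above --- in particular, checking that $\Theta^{*}$ simultaneously enforces nonnegativity of $\sigma^{+}$ and $\sigma^{-}$ \emph{and} both marginal equalities of~(\ref{eq2}) while returning precisely the objective value $\sigma^{+}(X\times S)$, even though $a$ and $b$ are constrained only to dominate sums of single-variable functions.
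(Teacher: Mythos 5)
Your proposal is correct, but it takes a genuinely different route from the paper. Both arguments first pass to the two-dimensional problem (\ref{prob_2d})--(\ref{eq2}) via Theorem \ref{th_equiv} and both handle general completely regular spaces by Stone--\v{C}ech compactification; the difference is in how strong duality for the two-dimensional problem is established. The paper takes the cited characterization of Theorem \ref{th_exist1} as its key lemma: it encodes the admissible marginals $(\mu,\nu)$ as nonnegative linear functionals $L$ dominated by the sublinear functionals $\Phi^{\pm}(u,v)=\int[u+v]_{+}\,d\pi^{\pm}$, forms the infimal convolution $\Phi$ of $\Phi^{+}$ and $\Phi^{-}$, and extracts an extremal $L$ with $L(1,1)=\Phi(1,1)$ by Hahn--Banach; the formula (\ref{opt}) then falls out as $\Phi(1,1)/2=\Phi(1/2,1/2)$. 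You instead run Fenchel--Rockafellar directly on the pair $(\sigma^{+},\sigma^{-})$, with $\Theta$ enforcing the marginal coupling through single-variable witnesses $\phi,\psi$ and $\Xi$ enforcing $\sigma^{\pm}\le\pi^{\pm}$; your conjugate computations (which I checked: the scaled $\phi$, $\psi$ and the nonnegative $\varphi$ with negative integral do force the degenerate cases to $-\infty$, and the matched-marginal case returns exactly $\sigma^{+}(X\times S)$) recover the admissible set and the objective. What each buys: the paper's route is shorter given Theorem \ref{th_exist1} and directly produces the optimal marginals $(\mu,\nu)$, from which the optimal $\gamma$ is built via Theorem \ref{th_exist1} and the gluing lemma; your route is self-contained --- it does not use Theorem \ref{th_exist1} at all (your parenthetical remark about it is only motivation) --- and mirrors the proof of Theorem \ref{th_dual}, at the cost of a somewhat heavier conjugate computation. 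Your elementary direct proof of the inequality ``$\le$'' via the identity $\sigma^{+}(X\times S)=\int(u+v)\,d\sigma^{+}+\int(1-u-v)\,d\sigma^{-}$ is a nice addition that the paper does not state explicitly.
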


\begin{proof}
Let us define functionals $\Phi^{+}$ and $\Phi^{-}$ on $C_b(X) \times C_b(S)$:  
$$
\Phi^{+}(u, v) = \int_{X \times S} [u(x) + v(s)]_{+} \pi^{+}(dx \, ds), 
$$
$$
\Phi^{-}(u, v) = \int_{X \times S} [u(x) + v(s)]_{+} \pi^{-}(dx \, ds)
$$
for all $u \in C_b(X)$, $v \in C_b(S)$. 
Note that $\Phi^{+}$ and $\Phi^{-}$ are continuous sublinear functionals on $C_b(X) \times C_b(S)$. 

We first consider the case where $X$ and $S$ are compact topological spaces. Then by Riesz theorem every continuous linear functional $L$ on $C_b(X) \times C_b(S)$ can be represented in the form  
$$
L(u, v) = \int_X u \, d\mu + \int_S v \, d \nu
$$
for some (signed) measures $\mu \in \mathcal M(X)$, $\nu \in \mathcal M(S)$. 
Moreover, the functional $L$ is nonnegative (that is, $L(u, v) \ge 0$ for all pairs of nonnegative functions  
$u \in C_b(X)$ and $v \in C_b(S)$) if and only if measures $\mu$ and $\nu$ are nonnegative, i.e.  
$\mu \in \mathcal M_{+}(X)$, $\nu \in \mathcal M_{+}(S)$. 

By virtue of Theorem \ref{th_exist1} the problem of finding measures $\mu \in \mathcal M_{+}(X)$ and $\nu \in \mathcal M_{+}(S)$ with maximum possible mass $\mu(X) = \nu(S)$ for which 
there exist measures $\sigma^{+}, \sigma^{-} \in \Pi(\mu, \nu)$ such that $\sigma^{+} \le \pi^{+}$ and $\sigma^{-} \le \pi^{-}$, is equivalent to finding a nonnegative continuous linear functional $L$ on 
$C_b(X) \times C_b(S)$  satisfying the inequalities $L \le \Phi^{+}$ and $L \le \Phi^{-}$, for which the value $L(1, 1)$ is maximal.  

Note that if $L$ is a linear functional on $C_b(X) \times C_b(S)$ satisfying the conditions $L \le \Phi^{+}$ and $L \le \Phi^{-}$, then $L$ is nonnegative and continuous. 
Indeed, for any nonnegative functions $u \in C_b(X)$ and $v \in C_b(S)$ we have 
$L(u, v) = -L(-u, -v) \ge -\Phi^{+}(-u, -v) = 0$. 
Furthermore, for any functions $u \in C_b(X)$ and $v \in C_b(S)$ it holds that $|L(u, v)| \le \Phi^{+}(u, v) + \Phi^{+}(-u, -v)$, 
which implies the continuity of the linear functional $L$ due to the continuity of $\Phi^{+}$. 

Let
\begin{multline} \label{def_phi}
\Phi(u, v) = \inf \bigl\{\Phi^{+}(u^{+}, v^{+}) + \Phi^{-}(u^{-}, v^{-}): u = u^{+} + u^{-}, v = v^{+} + v^{-}, \\ 
u^{+}, u^{-} \in C_b(X), v^{+}, v^{-} \in C_b(S) \bigr\} 
\end{multline}
for all $u \in C_b(X)$, $v \in C_b(S)$.

Let us show that if a continuous linear functional $L$ satisfies the conditions $L \le \Phi^{+}$ and $L \le \Phi^{-}$, 
then $L \le \Phi$.  
Let $u \in C_b(X)$ and $v \in C_b(S)$. Then for any functions $u^{+}, u^{-} \in C_b(X)$, $v^{+}, v^{-} \in C_b(S)$ 
such that $u = u^{+} + u^{-}$ and $v = v^{+} + v^{-}$, 
we have
$$
L(u, v) = L(u^{+}, v^{+}) + L(u^{-}, v^{-}) \le \Phi^{+}(u^{+}, v^{+}) + \Phi^{-}(u^{-},  v^{-}). 
$$ 
Thus, $L(u, v) \le \Phi(u, v)$. 

We prove that the functional $\Phi$ is sublinear, that is,  
$$\Phi(u_1 + u_2, v_1 + v_2) \le \Phi(u_1, v_1) + \Phi(u_2, v_2)$$ 
for all $u_1, u_2 \in C_b(X)$, $v_1, v_2 \in C_b(S)$. 
Indeed, for any  $u_i^{+}, u_i^{-} \in C_b(X)$ and $v_i^{+}, v_i^{-} \in C_b(S)$ such that $u_i = u_i^{+} + u_i^{-}$, $v_i = v_i^{+} + v_i^{-}$ for all $i \in \{1, 2\}$, we have
\begin{multline*}
\Phi(u_1 + u_2, v_1 + v_2)  \le \Phi^{+}(u_1^{+} + u_2^{+}, v_1^{+} + v_2^{+}) + \Phi^{-}(u_1^{-} + u_2^{-}, v_1^{-} + v_2^{-}) \le \\ \le
\Phi^{+}(u_1^{+}, v_1^{+}) + \Phi^{+}(u_2^{+}, v_2^{+}) + \Phi^{-}(u_1^{-}, v_1^{-}) + \Phi^{-}(u_2^{-}, v_2^{-}), 
\end{multline*}
since the functionals $\Phi^{+}$ and $\Phi^{-}$ are sublinear. 
Taking the infimum over all such functions $u_i^{+}, u_i^{-}$ and $v_i^{+}, v_i^{-}$, $i \in \{1, 2\}$, 
we obtain that $\Phi(u_1 + u_2, v_1 + v_2) \le \Phi(u_1, v_1) + \Phi(u_2, v_2)$. 

By Hahn-Banach theorem for the sublinear functional $\Phi$ there exists a linear functional $L$ on $C_b(X) \times C_b(S)$ such that $L \le \Phi$ and $L(1, 1) = \Phi(1, 1)$. 
Then, as mentioned above, from the inequality $L \le \Phi$ it follows that the linear functional $L$ is continuous and nonnegative. 
Therefore, $L$ can be represented in the form $L(u, v) = \int u d\mu + \int v d \nu$ for some nonnegative measures $\mu \in \mathcal M_{+}(X)$, $\nu \in \mathcal M_{+}(S)$. 
Then by Theorem \ref{th_exist1} for measures $\mu$ and $\nu$ it is possible to find measures $\sigma^+ \le \pi^+$ and $\sigma^- \le \pi^-$ with projections $\mu$ and $\nu$ on $X$ and $S$ respectively. 
Using gluing lemma, we can construct a measure $\gamma$ on $X \times Y \times S$ which gives the optimal solution to the problem (\ref{prob_cont})-(\ref{eq}). 
Therefore, the maximum in the problem (\ref{prob_cont})-(\ref{eq}) is equal to $\mu(X) = \nu(S) = L(1, 1)/2 = \Phi(1, 1) /2$
and by definition (\ref{def_phi}) this implies the equality (\ref{opt}). 

The general case of completely regular topological spaces can be reduced to the case of compact topological spaces 
using the Stone-Cech compactifications $\beta X$ and $\beta S$ of the spaces $X$ and $S$. 

\end{proof}

In \cite{KM3} another equivalent condition for the existence of a measure $\sigma \le \pi$ with given projections $\mu$ and $\nu$ is also given. 
Denote
$$
\Pi(\mu, \nu; \pi) = \{\sigma \in \Pi(\mu, \nu): \sigma \le \pi\}. 
$$

\begin{theorem}[\cite{KM3}] \label{th_exist2}
Let $\mu \in \mathcal M_{+}(X)$, $\nu \in \mathcal M_{+}(S)$ and $\pi \in \mathcal M_{+}(X \times S)$. The set $\Pi(\mu, \nu; \pi)$ is non-empty if and only if
$$
\mu(A) + \nu(B) \le \alpha + \pi(A \times B) 
$$
for all Borel sets $A \subset X$, $B \subset S$, where $\alpha = \mu(X) = \nu(S)$.
\end{theorem}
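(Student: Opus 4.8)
The plan is to prove the two implications separately: necessity is a one-line computation, and for sufficiency I would reduce everything to Theorem \ref{th_exist1}. For necessity, let $\sigma\in\Pi(\mu,\nu;\pi)$ and fix Borel sets $A\subset X$, $B\subset S$. Using $Pr_X(\sigma)=\mu$, $Pr_S(\sigma)=\nu$, the identity $\sigma(E)+\sigma(F)=\sigma(E\cup F)+\sigma(E\cap F)$ with $E=A\times S$, $F=X\times B$, and $(A\times S)\cap(X\times B)=A\times B$, we get
\[
\mu(A)+\nu(B)=\sigma\bigl((A\times S)\cup(X\times B)\bigr)+\sigma(A\times B)\le\alpha+\pi(A\times B),
\]
since $(A\times S)\cup(X\times B)\subset X\times S$ (so $\alpha=\sigma(X\times S)$ dominates the first term) and $\sigma\le\pi$ (so $\pi(A\times B)$ dominates the second).

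For sufficiency, assume $\mu(A)+\nu(B)\le\alpha+\pi(A\times B)$ for all Borel $A,B$. By Theorem \ref{th_exist1} it is enough to verify that $\int_X u\,d\mu+\int_S v\,d\nu\le\int_{X\times S}[u(x)+v(s)]_{+}\,\pi(dx\,ds)$ for all $u\in C_b(X)$, $v\in C_b(S)$ (one may assume $\pi$ is a finite measure). Since $[\,\cdot\,]_{+}$ is $1$-Lipschitz, replacing $u,v$ by uniformly close functions perturbs both sides by at most a multiple of the approximation error; hence it suffices to treat Borel simple functions $u=\sum_{i=1}^{m}a_i\mathbf 1_{A_i}$, $v=\sum_{j=1}^{n}b_j\mathbf 1_{B_j}$ subordinate to finite Borel partitions $\{A_i\}$ of $X$ and $\{B_j\}$ of $S$, because any $u\in C_b(X)$ is uniformly close to such a function (discretize the range of $u$), and likewise for $v$.

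Write $p_i=\mu(A_i)$, $q_j=\nu(B_j)$, $r_{ij}=\pi(A_i\times B_j)$; the inequality to be proved becomes $\sum_i a_ip_i+\sum_j b_jq_j\le\sum_{i,j}[a_i+b_j]_{+}\,r_{ij}$, while, taking $A=\bigcup_{i\in I}A_i$ and $B=\bigcup_{j\in J}B_j$, the hypothesis specializes to
\[
\sum_{i\in I}p_i+\sum_{j\in J}q_j\le\alpha+\sum_{i\in I,\ j\in J}r_{ij}\qquad\text{for all }I\subset\{1,\dots,m\},\ J\subset\{1,\dots,n\}.
\]
This is precisely the min-cut inequality for the bipartite network with a source edge of capacity $p_i$ into each node $x_i$, an edge $x_i\to y_j$ of capacity $r_{ij}$, and an edge $y_j\to t$ of capacity $q_j$; by the max-flow min-cut theorem (equivalently, the feasibility criterion of Gale and Hoffman for capacitated transportation) there is an array $\sigma_{ij}\ge 0$ with $\sum_j\sigma_{ij}=p_i$, $\sum_i\sigma_{ij}=q_j$ and $\sigma_{ij}\le r_{ij}$. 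Then $\sum_i a_ip_i+\sum_j b_jq_j=\sum_{i,j}(a_i+b_j)\sigma_{ij}\le\sum_{i,j}[a_i+b_j]_{+}\sigma_{ij}\le\sum_{i,j}[a_i+b_j]_{+}r_{ij}$, which is the required inequality, so Theorem \ref{th_exist1} produces the desired $\sigma\in\Pi(\mu,\nu;\pi)$.

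The step I expect to be the main obstacle is the identification in the last paragraph of the given family of set inequalities with the min-cut condition of the associated bipartite flow problem — in particular, keeping track of which side of a cut the nodes $x_i$ and $y_j$ fall on, so that the complementation $I\mapsto\{1,\dots,m\}\setminus I$ converts the raw min-cut inequality into the exact form stated in the theorem; once this bookkeeping is done, the classical feasible-flow theorem applies directly. The remaining ingredients — the necessity computation, the passage through Theorem \ref{th_exist1}, and the uniform approximation by simple functions — are routine. An alternative route that avoids Theorem \ref{th_exist1} is to build $\sigma$ by hand: for each pair of finite Borel partitions solve the finite flow problem, spread $\sigma_{ij}$ over the cell $A_i\times B_j$ as $(\sigma_{ij}/r_{ij})\,\pi|_{A_i\times B_j}$ (and nothing when $r_{ij}=0$), and extract a weak limit along the directed set of common refinements; the limit is $\le\pi$, and since every $\varphi\in C_b$ is eventually uniformly approximated by a function measurable with respect to the partition, its marginals are $\mu$ and $\nu$.
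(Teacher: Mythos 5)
The paper does not prove Theorem \ref{th_exist2} at all: it is imported verbatim from \cite{KM3}, just like Theorem \ref{th_exist1}, so there is no internal proof to compare yours against. Judged on its own, your argument is correct. The necessity computation via $\sigma(E)+\sigma(F)=\sigma(E\cup F)+\sigma(E\cap F)$ is exactly right, and the sufficiency direction is a legitimate derivation of the set-theoretic criterion from the functional criterion of Theorem \ref{th_exist1}: the uniform approximation of $u,v$ by simple functions is harmless because $[\,\cdot\,]_{+}$ is $1$-Lipschitz and all measures here are finite (in this paper $\mathcal M_{+}$ sits inside the bounded Radon measures, and in any case $A=X$, $B=S$ in the hypothesis forces $\alpha\le\pi(X\times S)$), and your identification of the specialized inequalities with the min-cut condition is accurate --- the complementation $J\mapsto J^{c}$ together with $\sum_j q_j=\alpha$ turns the cut capacity bound into precisely the stated family of inequalities, and saturation of all source and sink edges in a max flow of value $\alpha$ yields the feasible array $\sigma_{ij}$. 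Two caveats worth making explicit if this were to be written up: (i) your proof is conditional on Theorem \ref{th_exist1}, which the paper also quotes without proof, so what you have really shown is the equivalence of the two cited criteria (set condition $\Rightarrow$ functional condition $\Rightarrow$ existence $\Rightarrow$ set condition), not an independent proof of existence; (ii) the phrase ``one may assume $\pi$ is a finite measure'' should be justified or replaced by the observation that finiteness is automatic in the paper's setting. Your alternative sketch at the end (building $\sigma$ directly as a weak limit over refinements of partitions, using tightness/Prokhorov) would give a proof independent of Theorem \ref{th_exist1} and is closer in spirit to a from-scratch argument, but as written it is only an outline.
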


Let us denote
$$
r(A, B) = \min(\pi^{+}(A \times B), \pi^{-}(A \times B)) 
$$
for all Borel sets $A \subset X$, $B \subset S$. 
Then the problem (\ref{prob_2d})-(\ref{eq2}) can be reduced to finding such measures $\mu \in \mathcal M_{+}(X)$ and $\nu \in \mathcal M_{+}(S)$ with maximum possible total mass $\mu(X) = \nu(S) = \alpha$ such that 
\begin{equation} \label{cond_r}
\mu(A) + \nu(B) \le \alpha + r(A, B) 
\end{equation}
for all Borel sets $A \subset X$, $B \subset S$. 

The condition (\ref{cond_r}) may be rewritten in the following equivalent form:
\begin{equation} \label{cond_r2}
\mu(A) \le \nu(S \setminus B) + r(A, B), \quad \nu(B) \le \mu(X \setminus A) + r(A, B). 
\end{equation}

Hence the optimal solution to the problem of optimal exchange can be constructed in the following way.  

Step 1. We find measures $\mu \in \mathcal M_{+}(X)$ and $\nu \in \mathcal M_{+}(S)$ with maximum possible mass 
for which the condition (\ref{cond_r2}) is satisfied. 

Step 2. Using Theorem \ref{th_exist2}, we find measures $\sigma^{+}, \sigma^{-} \in \Pi(\mu, \nu)$ such that 
$\sigma^{+} \le \pi^{+}$ and $\sigma^{-} \le \pi^{-}$.

Step 3. Applying gluing lemma for measures $\sigma^{+}$ and $\sigma^{-}$, we construct a measure $\gamma$ 
which provides an optimal solution to the problem of optimal exchange.  

\section{Reduction to a Kantorovich problem with density constraints}

In this section we will assume that the measures $\pi^{+}$ and $\pi^{-}$ are concentrated on disjoint sets 
$A^{+}$ and $A^{-}$, where $A^{+}, A^{-} \subset X \times S$ are Borel sets. 
This assumption means that for every trading participant the set of goods which he is ready to share with other participants
and the set of goods which he wishes to receive do not intersect. 

We show that in this case the problem of optimal exchange can be reduced to a Kantorovich problem with density constraints. 

Let  
$$
\mu^{+} = Pr_X \pi^{+}, \, \nu^{+} = Pr_S \pi^{+}, \quad \pi = \pi^{+} + \pi^{-}. 
$$ 

We define the cost function $h \colon X \times S \to \mathbb R$ in the following way: 
$h(x, s) = 1$ for all $(x, s) \in A^{+}$ and $h(x, s) = 0$ otherwise. 

Consider the following Kantorovich optimal transportation problem with density constraints: 
we aim to minimize the integral
\begin{equation} \label{opt_transport}
\int_{X \times S} h(x, s) \tau(dx \, ds) \to \inf, \quad \tau \in \Pi(\mu^{+}, \nu^{+}; \pi), 
\end{equation}
over all measures $\tau$ from the set
$\Pi(\mu^{+}, \nu^{+}; \pi) = \{\tau \in \Pi(\mu^{+}, \nu^{+}): \tau \le \pi\}$. 

Denote by $K_h(\mu^{+}, \nu^{+}; \pi)$ the infimum in the Kantorovich problem (\ref{opt_transport}). 

\begin{theorem}
The maximum in the problem (\ref{prob_2d})-(\ref{eq2}) is equal to 
$$\pi^{+}(X \times S) - K_h(\mu^{+}, \nu^{+}; \pi). $$ 
The measures $(\sigma^{+}, \sigma^{-})$ constitute an optimal solution to the problem (\ref{prob_2d})-(\ref{eq2}) 
if and only if the measure $\tau = \pi^{+} - \sigma^{+} + \sigma^{-}$ 
is an optimal solution to the problem (\ref{opt_transport}). 
\end{theorem}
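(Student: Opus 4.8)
\emph{Proof plan.} The idea is to build an explicit affine bijection between the feasible set of the problem (\ref{prob_2d})--(\ref{eq2}) and the set $\Pi(\mu^{+}, \nu^{+}; \pi)$ under which the two objective functionals correspond, so that the value identity and the optimality equivalence both follow at once. The disjointness of $A^{+}$ and $A^{-}$ is exactly what makes the construction work. First I would record the following decomposition: if $\tau \le \pi = \pi^{+} + \pi^{-}$, then, since $\pi^{+}$ is concentrated on $A^{+}$ and $\pi^{-}$ on $A^{-}$, the measure $\tau$ is concentrated on $A^{+} \cup A^{-}$, and writing $\tau^{+}$, $\tau^{-}$ for the restrictions of $\tau$ to $A^{+}$ and $A^{-}$ we get $\tau = \tau^{+} + \tau^{-}$ with $\tau^{+} \le \pi^{+}$ and $\tau^{-} \le \pi^{-}$. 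Conversely, given $\sigma^{+} \le \pi^{+}$ and $\sigma^{-} \le \pi^{-}$, the measures $\pi^{+} - \sigma^{+}$ and $\sigma^{-}$ are nonnegative and concentrated on the disjoint sets $A^{+}$ and $A^{-}$, so $\tau := \pi^{+} - \sigma^{+} + \sigma^{-} \ge 0$ and $\tau \le \pi^{+} + \pi^{-} = \pi$. I would then define the map $(\sigma^{+}, \sigma^{-}) \mapsto \pi^{+} - \sigma^{+} + \sigma^{-}$ and check, using the decomposition above, that its inverse is $\tau \mapsto (\pi^{+} - \tau^{+}, \tau^{-})$.

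Next I would match the remaining constraints. For $\tau = \pi^{+} - \sigma^{+} + \sigma^{-}$ one has $Pr_{X}(\tau) = \mu^{+} - Pr_{X}(\sigma^{+}) + Pr_{Y}(\sigma^{-})$ and $Pr_{S}(\tau) = \nu^{+} - Pr_{S}(\sigma^{+}) + Pr_{S}(\sigma^{-})$, so (recalling $X = Y$) the two marginal conditions in (\ref{eq2}), namely $Pr_{X}(\sigma^{+}) = Pr_{Y}(\sigma^{-})$ and $Pr_{S}(\sigma^{+}) = Pr_{S}(\sigma^{-})$, are precisely equivalent to $Pr_{X}(\tau) = \mu^{+}$ and $Pr_{S}(\tau) = \nu^{+}$, i.e. to $\tau \in \Pi(\mu^{+}, \nu^{+}; \pi)$. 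Reading the same identities backwards shows that for $\tau \in \Pi(\mu^{+}, \nu^{+}; \pi)$ the pair $(\pi^{+} - \tau^{+}, \tau^{-})$ satisfies (\ref{ineq2}) and (\ref{eq2}). Thus the two maps are mutually inverse bijections between the feasible sets.

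Finally I would compare the objectives: since $h$ equals $1$ on $A^{+}$ and $0$ off $A^{+}$, while $\pi^{+} - \sigma^{+}$ is concentrated on $A^{+}$ and $\sigma^{-}$ on $A^{-}$, we get $\int_{X \times S} h \, d\tau = \tau(A^{+}) = (\pi^{+} - \sigma^{+})(X \times S) = \pi^{+}(X \times S) - \sigma^{+}(X \times S)$, hence $\sigma^{+}(X \times S) = \pi^{+}(X \times S) - \int_{X \times S} h \, d\tau$ along the bijection. Taking the supremum of the left-hand side (which is attained, by Theorems \ref{th_exist} and \ref{th_equiv}) gives the claimed value $\pi^{+}(X \times S) - K_{h}(\mu^{+}, \nu^{+}; \pi)$ and, since the correspondence is a bijection of feasible sets under which the objectives are related by a fixed affine decreasing transformation, $(\sigma^{+}, \sigma^{-})$ is optimal for (\ref{prob_2d})--(\ref{eq2}) if and only if $\tau = \pi^{+} - \sigma^{+} + \sigma^{-}$ is optimal for (\ref{opt_transport}). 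The only step demanding genuine care is the verification of the density constraint $\tau \le \pi$, which is where the disjointness hypothesis is used essentially; everything else is routine bookkeeping with marginals and restrictions.
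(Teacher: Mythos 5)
Your proposal is correct and follows essentially the same route as the paper: the same affine bijection $(\sigma^{+},\sigma^{-}) \leftrightarrow \tau = \pi^{+} - \sigma^{+} + \sigma^{-}$, with inverse given by restricting $\tau$ to $A^{+}$ and $A^{-}$, the same marginal bookkeeping, and the same computation of the objective. One small inaccuracy in your closing remark: the bound $\tau \le \pi$ actually holds without disjointness (since $\pi^{+} - \sigma^{+} \le \pi^{+}$ and $\sigma^{-} \le \pi^{-}$); where disjointness is genuinely essential is in making the inverse map well defined and in the identity $\tau(A^{+}) = \pi^{+}(X \times S) - \sigma^{+}(X \times S)$, which requires $\sigma^{-}(A^{+}) = 0$.
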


\begin{proof}
We build a one-to-one correspondence between pairs of measures 
$\sigma^{+}, \sigma^{-} \in \mathcal M_{+}(X \times S)$ satisfying the conditions (\ref{ineq2}) and (\ref{eq2})
and measures $\tau \in \Pi(\mu^{+}, \nu^{+}; \pi)$, such that
\begin{equation} \label{equal}
\int_{X \times S} h(x, s) \tau(dx \, ds) = \pi^{+}(X \times S) - \sigma^{+}(X \times S), 
\end{equation}
if a measure $\tau$ corresponds to the pair of measures $(\sigma^{+}, \sigma^{-})$. 

Let measures $\sigma^{+}, \sigma^{-} \in \mathcal M_{+}(X \times S)$ satisfy the conditions (\ref{ineq2}) and (\ref{eq2}). 
Set $$\tau = \pi^{+} - \sigma^{+} + \sigma^{-}. $$
Then $\tau \ge 0$, because $\sigma^{+} \le \pi^{+}$. 
Since measures $\sigma^{+}$ and $\sigma^{-}$ have equal projections on $X$ and $S$, 
we have $Pr_X \tau = Pr_X \pi^{+} = \mu^{+}$ and $Pr_S \tau = Pr_S \pi^{+} = \nu^{+}$, that is, $\tau \in \Pi(\mu^{+}, \nu^{+})$. 
Moreover, $\tau \le \pi = \pi^{+} + \pi^{-}$, since $\sigma^{-} \le \pi^{-}$. 
Therefore, $\tau \in \Pi(\mu^{+}, \nu^{+}; \pi)$. 

Furthermore, we have
$$
\int_{X \times S} h(x, s) \tau(dx \, ds) = \tau(A^{+})  = \pi^{+}(A^{+}) - \sigma^{+}(A^{+})
$$
since $\sigma^{-} \le \pi^{-}$ and $\pi^{-}(A^{+}) = 0$. The last expression equals
$\pi^{+}(X \times S) - \sigma^{+}(X \times S)$, 
because $\sigma^{+} \le \pi^{+}$ and the measure $\pi^{+}$ is concentrated on $A^{+}$. 

Conversely, let $\tau \in \Pi(\mu^{+}, \nu^{+}; \pi)$. We set 
$$\sigma^{+} = \pi^{+} - \tau|_{A^{+}}, \quad \sigma^{-} = \tau|_{A^{-}}. $$
Since $\tau \le \pi$, we have $\tau|_{A^{+}} \le \pi|_{A^+} = \pi^{+}$ and $\tau|_{A^{-}} \le \pi|_{A^{-}} = \pi^{-}$. 
Therefore, $\sigma^{+}$ and $\sigma^{-}$ are nonnegative measures and $\sigma^{+} \le \pi^{+}$, 
$\sigma^{-} \le \pi^{-}$. 

Let us show that $Pr_X \sigma^{+} = Pr_X \sigma^{-}$ and $Pr_S \sigma^{+} = Pr_S \sigma^{-}$. 
Indeed, the difference of measures $\sigma^{+} - \sigma^{-} = \pi^{+} - \tau$ has zero projections on $X$ and $S$, 
since $\tau \in \Pi(\mu^{+}, \nu^{+})$. 

Thus, the measures $\sigma^{+}$ and $\sigma^{-}$ satisfy the conditions (\ref{ineq2}) and (\ref{eq2}). 

The constructed correspondences between pairs of measures $(\sigma^{+}, \sigma^{-})$ and measures $\tau$ 
are mutually inverse and satisfy the equality (\ref{equal}). 
Therefore, the maximum in the problem (\ref{prob_2d})-(\ref{eq2}) is equal to 
$$\pi^{+}(X \times S) - K_h(\mu^{+}, \nu^{+}; \pi),$$ 
and the optimality of measures $(\sigma^{+}, \sigma^{-})$ for the problem (\ref{prob_2d})-(\ref{eq2}) 
is equivalent to the optimality of the measure $\tau = \pi^{+} - \sigma^{+} + \sigma^{-}$ for the problem (\ref{opt_transport}). 

\end{proof}

\end{document}